\newtheorem{thm}{Theorem}[section]
\newtheorem{prop}[thm]{Proposition}
\newtheorem{lem}[thm]{Lemma}
\newtheorem{cor}[thm]{Corollary}
\theoremstyle{definition}
\newtheorem{defi}[thm]{Definition}
\theoremstyle{remark}
\newtheorem{rem}[thm]{Remark}
\numberwithin{equation}{section}
\newcommand{\R}{\mathbb{R}^n}
\begin{document}

\title{Unique continuation for fractional orders of elliptic equations}

\author{Hui Yu}
\address{Department of Mathematics, the University of Texas at Austin}
\email{hyu@math.utexas.edu}

\begin{abstract}
We establish the strong unique continuation property of fractional orders of linear elliptic equations with Lipschitz coefficients by establishing almost monotonicity for an Almgren-type frequency functional via an extension procedure.\end{abstract}

 \maketitle

\tableofcontents

%%%%%%%%%%%%%%%%%%%%%%%%%%%%%%%%%%%%%%%%%%%%%%%%%%%%%
\section{Introduction}
It is well-known that analytic functions cannot vanish at infinite order at any point, their vanishing orders being dictated by degrees of their principal polynomials. This implies the strong unique continuation property for Laplace's equation since solutions to this equation are analytic. 

It is then natural to ask whether all elliptic equations impose unique continuation properties like Laplace's. This was first answered in the affirmative for nice equations in two spatial dimensions by Carleman \cite{Ca}. Following his approach, unique continuation properties for more general equations in higher spatial dimensions were studied via various Carleman-type estimates. The reader is encouraged to consult the survey of Tataru \cite{Ta} for more results in this direction. 

In contrast to this `hard' and often technical strategy, Garofalo and Lin discovered a comparatively `soft' geometric-variational approach to unique continuation properties in a series of papers \cite{G}\cite{GL1}\cite{GL2} based on ideas of Aronszajn-Krzywicki-Szarski \cite{AKS}. They proved that, in any spatial dimensions, linear second-order elliptic equations with Lipschitz coefficients enjoy unique continuation properties. This was recently generalized to fully nonlinear uniformly elliptic equations of second order by Armstrong-Silvestre \cite{AS} through a perturbative technique by Savin \cite{Sa}. It is interesting to note that the result of Garofalo-Lin is essentially optimal, considering the counterexamples to higher-order elliptic equations \cite{Co} and to second-order elliptic equations with merely H\"older coefficients \cite{Pl}. 

The goal of this paper is to generalize the result of Garofalo-Lin to elliptic equations of fractional orders between $0$ and $1$. 

The precise definition of a fractional order elliptic equation is given in the next section.  But to get some intuition about these operators, let's consider the following simple case.

Let  $L$ be a uniformly elliptic operator defined on a bounded domain $\Omega\subset\R$. It is well-known that $L$ has discrete spectrum $\{(\lambda_j, e_j)\}$, where $\lambda_j$'s are the eigenvalues, and $e_j$'s are the corresponding eigenfunctions, forming an orthonormal basis of $\mathcal{L}^2(\Omega)$.

Then one can define, for $s\in (0,1)$, the $s$-order version of $L$ by the following $$L^s u=\Sigma \lambda^s_j u_je_j,$$where $u=\Sigma u_je_j$ is the expansion of $u$. This procedure can be done for possibly unbounded domains with more sophisticated spectral theory, and leads to to many interesting operators, including fractional powers of Ornstein-Uhlenbeck operators, Bessel operators, and, more relevant to this work, fractional powers of divergence-type elliptic operators \cite{S}\cite{ST}\cite{SZ}. 

To illustrate the strategy in this paper, it is helpful to see one possible approach  to unique continuation properties of \textit{second order} equations. Here we combine ideas from Garofalo-Lin and the recent work of Fall-Felli \cite{FF}.  

Suppose that $u$ is a nontrivial solution in $B_1\subset \R$ to the equation $$div (A(\cdot)\nabla u)=0.$$ We would like to say that $u$ cannot vanish at infinite order at the origin. It is natural to study the family of blow-ups, as $r\to 0$, $$u_r(x):=u(rx)/r^{\gamma}$$ for  $\gamma>0$.  If this family converge to some nontrivial function, then $u$ has a vanishing order no higher than $\gamma$. For the rigorous definition of vanishing orders, see the next section.

It is then crucial to find the \text{right} $\gamma$.  If $\gamma$ is too small, $\{u_r\}$ converges to the trivial solution. If $\gamma$ is too big, compactness is lost, and there would in general be no limit of $\{u_r\}$.

The idea is to blow up like $$\tilde u_r(x)=\frac{u(rx)}{\sqrt{H(r)/r^{n-1}}},$$ where $H$ is the height function $$H(r):=\int_{\partial B_r} u^2 dH^{n-1}.$$

For this family one has $$H_{\tilde{u}}(1)=\int_{\partial B_1}\tilde{u}_r^2dH^{n-1}=1.$$ Therefore, if they converge, the limit must be nontrivial. On the other hand, compactness of the family follows from \begin{equation}D_{\tilde{u}_r}(1):=\int_{B_1}|\nabla \tilde{u}_r|^2dx\le C.\end{equation} This is a consequence of the almost monotonicity of the so-called frequency functional $$N(r):=rD(r)/H(r):=\frac{r\int_{B_r}\langle A(x)\nabla u,\nabla u\rangle dx}{\int_{\partial B_r}u^2dH^{n-1}}.$$ 

For harmonic functions it is a classical result of Almgren \cite{Al} that $N$ increases with respect to $r$. His proof works for equations of the form $div(\mu(x)\nabla u)=0$ where $\mu$ is a scalar function uniformly bounded away from $0$ and infinity.  What Garofalo and Lin realized in \cite{GL1}\cite{GL2}, based on the work of Aronszajn-Krzywicki-Szarski \cite{AKS}, is that our equation $div(A(x)\nabla u)=0$ transforms into $div_g(\mu(x)\nabla_g u)=0$ once we define the correct Riemannian metric $g$ based on $A$. Using this they established the almost monotonicity of $N$, and thus $N(r)\le CN(1)$ for all $r<1$.

Scaling symmetry then gives $$N_{\tilde{u}_r}(1)=N(r)\le CN(1),$$ and hence $$D_{\tilde{u}_r}(1)\le CN(1)H_{\tilde{u}_r}(1)=CN(1),$$ leading to the compactness (1.1). 

Moreover, being a monotone function, $N(r)$ has a limit as $r\to 0$. And it can be shown that for small $r$, $$H(r)\sim r^{2\lim N(r)}.$$ Thus if we take $\gamma=\lim N(r)$ then $$u_r\sim \tilde{u}_r.$$ The family $\{u_r\}$ converge to a nontrivial function, giving the strong unique continuation property of the equation.

To generalize unique continuation properties to equations of fractional order, one might first look for the right frequency function for solutions. Due to the nonlocal nature of fractional-order equations, it would be very surprising if something like $N$ is monotone, since it is defined in terms of local quantities. 

For the standard fractional Laplacians, however,  this lack of locality can be surpassed with the extension result of Caffarelli-Silvestre \cite{CS}. Given some $s\in (0,1)$, we define the extension of $u:\R\to\mathbb{R}$ to be $U:\R\times(0,\infty)\to \mathbb{R},$ the solution to $$div_{x,y}(y^{1-2s}\nabla_{x,y}U)=0$$ in the upper-half space with boundary datum $u$. Here and in the rest of the paper, $x$ denotes the variable in $n$-dimensional Euclidean space while $y$ denotes the variable in $(0,\infty)$. Depending on the order $s$, the equation for $U$ is either degenerate or singular, but it is entirely \textit{local}. In particular, it makes sense to look at the frequency function of $U$ and \textit{define} it to be the frequency of $u$.

Caffarelli-Silvestre observed $$\Delta^{s}u(x)=\lim_{y\to 0}y^{1-2s}\frac{\partial}{\partial y}U(x,y).$$ Thus if $\Delta^s u=0$ in some domain $\Omega\in\R$, then $\lim_{y\to 0}y^{1-2s}\frac{\partial}{\partial y}U(x,y)=0$ in that domain, which is enough for the monotonicity of the frequency. 

This increasing frequency is used in Fall-Felli \cite{FF} to give the right rate of blow-up for solutions to fractional Laplacians. They gave very precise description of possible blow-up limits in terms of a Neumann eigenvalue problem on the sphere, and in particular this asymptotic analysis shows that nontrivial solutions to $\Delta^s u=0$ have finite vanishing orders. An interesting point is that simply blowing-up in $H^1(\mathbb{R}^{n+1})$ to some nontrivial function is not sufficient. This is because we are blowing up the extension $U$, while the function of interest is $u$, defined on a null subset of $\mathbb{R}^{n+1}$. Thus one has to use blow-up in spaces tighter than $\mathcal{L}^{\infty}$, for instance, H\"older spaces, to catch the pointwise behaviour instead of measure theoretic behaviour of the blow-ups.  This is achieved through compactness of degenerate elliptic equations \cite{FKS}\cite{JLX}.

The work of Fall-Felli has led to a lot of works involving the fractional Laplacian operator, see for instance \cite{Se} and the reference therein. 
To apply their strategy to \textit{variable-coefficient} operators, however, two ingredients are missing. Firstly, we need an extension procedure to get some local equation from our fractional order equation. This would allow us to define the frequency functional for the extended function, corresponding to the extension procedure of Caffarelli-Silvestre.  Fortunately for us, for a large class of operators, this extension procedure has been studied by Stinga and Stinga-Torrea \cite{S} \cite{ST} using spectral theory.

The second ingredient one needs is a monotonicity result for the frequency function, now for a degenerate/ singular variable-coefficient equation. Due to the degeneracy/ singularity near the hyperplane $\{y=0\}$, it seems difficult to find a Riemannian metric as in Garofalo-Lin. For certain degenerate/ singular equations, there have been the works of Tao \cite{T} and Tao-Zhang \cite{TZ}. Our equations are not covered by their results, since our weight $y^{1-2s}$ does not enjoy the assumption of radial symmetry as imposed in their work. However, using the special structure of the extension equation, which results in some cancellations,  we can still prove the almost monotonicity result. Note that as a by-product we have the unique continuation properties for a class of degenerate/ singular elliptic equations, which seems interesting on its own.

To be more precise, we have 

\begin{thm}
Let $A$ be Lipschitz and uniformly elliptic with $A(0)=id$. $L=div(A(\cdot)\nabla)$.

If $u\in H^s(\R)$ satisfies $$L^su=0 \text{ in $B_1$}$$ and $N$ is the frequency function, then the following is nondecreasing $$\bar{N}(r)=e^{Cr}N(r),$$ for some $C$ depending only on elliptic constants, the dimension of the space and the Lipschitz constant of $A$.
\end{thm}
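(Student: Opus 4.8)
The plan is to follow the Garofalo--Lin strategy adapted to the Caffarelli--Silvestre--Stinga--Torrea extension. First I would invoke the extension result of Stinga--Torrea to pass from the fractional equation $L^s u = 0$ in $B_1$ to a \emph{local} degenerate/singular equation for the extension $U\colon \R\times(0,\infty)\to\mathbb{R}$, of the form $\operatorname{div}_{x,y}\bigl(y^{1-2s}\mathcal{A}(x)\nabla_{x,y}U\bigr)=0$ (with $\mathcal{A}$ built from $A$, acting trivially in the $y$-direction), together with the weighted Neumann condition $\lim_{y\to 0}y^{1-2s}\mathcal{A}\nabla U\cdot e_{n+1}=c_s L^s u=0$ on $B_1\times\{0\}$. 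This reduces Theorem~1.1 to an almost-monotonicity statement for the Almgren frequency $N(r)=rD(r)/H(r)$ where now $H(r)=\int_{\partial B_r^+} y^{1-2s}U^2\,d\sigma$ and $D(r)=\int_{B_r^+} y^{1-2s}\langle\mathcal{A}\nabla U,\nabla U\rangle$, the balls being half-balls in $\R\times(0,\infty)$.

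The core computation is to differentiate $\log N(r)=\log r + \log D(r)-\log H(r)$ in $r$ and show $N'(r)/N(r)\ge -C$. For this I would first establish the two structural identities: a Rellich--Pohozaev type identity giving $D'(r)$ in terms of a boundary integral of $y^{1-2s}$ times the squared normal derivative plus lower-order terms controlled by $\|\nabla\mathcal{A}\|_\infty$, and a differentiation formula $H'(r)=\frac{n+1-2s}{r}H(r)+2\int_{\partial B_r^+}y^{1-2s}U\partial_\nu U$, which by the equation and the vanishing Neumann datum equals $\frac{n+1-2s}{r}H(r)+2D(r)$ (this is where the Neumann condition $L^s u = 0$ enters, killing the would-be boundary term on $\{y=0\}$; one must check the weighted trace makes sense, using the regularity theory of \cite{FKS},\cite{JLX} for $A_2$-weights). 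Combining these with Cauchy--Schwarz applied to $D(r)^2 = \bigl(\int_{\partial B_r^+} y^{1-2s}U\partial_\nu U\bigr)^2 \le \bigl(\int y^{1-2s}U^2\bigr)\bigl(\int y^{1-2s}(\partial_\nu U)^2\bigr)$ should yield $N'(r)\ge -C N(r)$ after absorbing the error terms, which is exactly the claim that $e^{Cr}N(r)$ is nondecreasing.

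The place where the variable-coefficient case genuinely differs from the constant-coefficient one — and where I expect the main obstacle to lie — is the Rellich--Pohozaev identity. Testing the equation against the vector field $\beta(x,y)\cdot\nabla U$, where $\beta$ is a suitable radial-type deformation field adapted to the geometry, produces not only the desired boundary terms but also bulk error terms of the form $\int_{B_r^+} y^{1-2s}(\nabla\mathcal{A})(\beta,\cdot)(\nabla U,\nabla U)$ and terms coming from the fact that $\beta$ is not exactly the generator of dilations for the metric induced by $\mathcal{A}$. The hypotheses $A$ Lipschitz and $A(0)=\mathrm{id}$ are precisely what let one write $\mathcal{A}(x) = \mathrm{id}+O(|x|)$ with Lipschitz remainder, so that these error terms are bounded by $C\int_{B_r^+}y^{1-2s}|\nabla U|^2 = C\,D(r)$, hence absorbable into $-CN(r)$; the delicate point is that the deformation field must be chosen so that its interaction with the \emph{degenerate weight} $y^{1-2s}$ generates no uncontrolled contribution near $\{y=0\}$ — this is the analogue of the "special structure / cancellation" the author alludes to, and it is what makes the argument work despite the weight not being radially symmetric in the sense of \cite{T},\cite{TZ}.

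A secondary technical point worth flagging is justification of all the integrations by parts: since $y^{1-2s}$ is an $A_2$ Muckenhoupt weight, $U$ lies in the weighted Sobolev space and one has the Caccioppoli and boundary-regularity estimates of \cite{FKS},\cite{JLX}, but one should still verify (via an approximation/cutoff argument in $y$) that the boundary integrals over $\partial B_r^+$ and the limiting Neumann term on $B_r^+\cap\{y=0\}$ are well defined and that no distributional term is dropped. Granting that, the monotonicity formula follows from the two identities plus Cauchy--Schwarz as above, with $C$ depending only on $n$, the ellipticity constants of $A$, and $\operatorname{Lip}(A)$, as asserted.
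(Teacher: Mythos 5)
Your overall skeleton --- extension to the local degenerate equation, Almgren frequency, a differentiation formula for $H$, a Rellich--Pohozaev identity for $D$, then Cauchy--Schwarz --- is exactly the paper's route (the paper works with the even reflection of $U$ across $\{y=0\}$ and full balls rather than half-balls with a Neumann condition, an inessential difference). But there is a concrete gap at the step you treat as routine. With your unweighted height $H(r)=\int_{\partial B_r^+}y^{1-2s}U^2$, the claim that $H'(r)=\frac{n+1-2s}{r}H(r)+2\int_{\partial B_r^+}y^{1-2s}U\partial_\nu U$ ``by the equation and the vanishing Neumann datum equals $\frac{n+1-2s}{r}H(r)+2D(r)$'' is false for variable $A$: what the equation gives is the \emph{conormal} identity $D(r)=\int_{\partial B_r^+}y^{1-2s}U\langle A\nu,\nabla U\rangle\,d\sigma$, and the discrepancy $\int_{\partial B_r^+}y^{1-2s}U\langle (A-\mathrm{id})\nu,\nabla U\rangle\,d\sigma$ involves the full gradient of $U$ on the sphere; it is of size $O(r)\bigl(\int y^{1-2s}U^2\bigr)^{1/2}\bigl(\int y^{1-2s}|\nabla U|^2\bigr)^{1/2}$ and cannot be absorbed into $O(1)H(r)$. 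The same mismatch propagates into your Cauchy--Schwarz step: a Rellich identity with any reasonable deformation field produces the conormal square $\langle A\nu,\nabla U\rangle^2$ on $\partial B_r$, not $(\partial_\nu U)^2$, so the three boundary quantities you need (from $H'$, from $D$ via the equation, from $D'$ via Rellich) do not line up as written.

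The missing idea --- which is the actual content of the paper's Lemmas 3.4 and 3.5 --- is the specific choice of weights: the height function carries the factor $\mu(z)=\langle A(z)z,z\rangle/|z|^2$, i.e.\ $H(r)=\int_{\partial B_r}|y|^{1-2s}\mu\,U^2$, and the Rellich identity is run with the field $\beta(z)=A(z)z/\mu(z)$, not a generic radial-type field. Then $\kappa:=A\nu-\mu\nu$ is tangent to $\partial B_r$, so the conormal/normal discrepancy can be integrated by parts \emph{along the sphere} and bounded by $O(1)H(r)$; the cancellation with the degenerate weight that you correctly suspected must occur is explicit here: the $(n+1)$-st component of $\kappa$ carries a factor of $y$, so $\nabla(y^{1-2s})\cdot\kappa=(1-2s)y^{-2s}\bigl(1-\tfrac{1}{\mu}\bigr)y\le Cy^{1-2s}$, and likewise $\beta_{n+1}=y/\mu$ tames the $(1-2s)y^{-2s}$ term arising in the Rellich computation. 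Moreover $\beta\cdot\nu=r$ and $\beta\cdot\nabla U=\frac{r}{\mu}\langle A\nu,\nabla U\rangle$ on $\partial B_r$, so $D'(r)$ acquires the boundary term $2\int_{\partial B_r}|y|^{1-2s}\frac{1}{\mu}\langle A\nu,\nabla U\rangle^2$, which pairs under Cauchy--Schwarz ($\mu^{-1/2}\langle A\nu,\nabla U\rangle$ against $\mu^{1/2}U$) precisely with the $\mu$-weighted $H$. Without these explicit choices the argument does not close; supplying them is essentially the whole proof, so as it stands your proposal identifies the right strategy but leaves out its decisive mechanism.
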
 

Here $H^s(\R)$ is the fractional Sobolev space of order $s$. The author would like to thank an anonymous referee for pointing out to the author this characterizaion for domains of fractional orders of elliptic operators. 
The reader is encouraged to consult Caffarelli-Stinga \cite{CSt} and Jin-Li-Xiong \cite{JLX} for more details. 

For the definition of this frequency function $N$, see section 3. 

As an interesting by-product, we establish the strong unique continuation property for elliptic equations with $|y|^{1-2s}$-weights:

\begin{thm}Let $A$ be as in the previous theorem, then 
the equation  $$div(|y|^{1-2s}A(\cdot)\nabla U)=0$$ has the strong unique continuation property.
\end{thm}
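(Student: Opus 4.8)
The plan is to reduce the strong unique continuation property for $\operatorname{div}(|y|^{1-2s}A(\cdot)\nabla U)=0$ to the almost-monotonicity of an Almgren-type frequency functional adapted to the weight $|y|^{1-2s}$, exactly in the spirit of the reduction sketched in the introduction for the second-order case, but now carried out for the degenerate/singular equation directly rather than through an extension. First I would set up the weighted height and Dirichlet energies
\[
H(r)=\int_{\partial B_r}|y|^{1-2s}U^2\,dH^{n}, \qquad D(r)=\int_{B_r}|y|^{1-2s}\langle A(x,y)\nabla U,\nabla U\rangle\,dx\,dy,
\]
here $B_r\subset\mathbb{R}^{n+1}$, and define $N(r)=rD(r)/H(r)$. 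The observation that makes this work is that the weight $|y|^{1-2s}$ is \emph{even} in $y$ and the equation is local, so a solution across $\{y=0\}$ satisfies the Neumann-type transmission condition $\lim_{y\to 0^+}|y|^{1-2s}U_y = \lim_{y\to 0^-}|y|^{1-2s}U_y$ by symmetry — this is the structural cancellation alluded to in the introduction, and it is precisely the hypothesis one needs to run the frequency computation with no boundary contribution on $\{y=0\}$. I would then observe that Theorem 1.1 already supplies the key analytic input: its proof establishes almost monotonicity, $\bar N(r)=e^{Cr}N(r)$ nondecreasing, for exactly this class of weighted equations (the extension equation appearing there is $\operatorname{div}_{x,y}(y^{1-2s}A\nabla U)=0$), so for Theorem 1.2 I mainly need to repackage that monotonicity into the doubling + blow-up argument.

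The key steps, in order: (1) differentiate $H$ and $D$ in $r$, using the divergence form of the equation and the evenness of the weight, to get $H'(r) = \tfrac{n}{r}H(r) + 2D(r) + (\text{Lipschitz error})$ and a corresponding identity for $D'(r)$; combining these yields $\frac{d}{dr}\log N(r) \ge -C$, i.e. the almost monotonicity (this is where one invokes the machinery behind Theorem 1.1). (2) Deduce the \emph{doubling estimate}: from $N(r)\le C$ on $(0,r_0)$ one integrates $\frac{H'(r)}{H(r)} = \frac{n}{r} + \frac{2N(r)}{r} + O(1)$ to get $H(2r)\le C\, H(r)$ for all small $r$, with $C$ independent of $r$. (3) Suppose for contradiction $U$ vanishes to infinite order at the origin in the $L^2(|y|^{1-2s})$-sense; the doubling estimate immediately forces $H(r)\equiv 0$ for small $r$, hence $U\equiv 0$ near the origin. (4) Propagate this via a connectedness/unique-continuation-from-a-set argument — or alternatively run the blow-up $\widetilde U_r(X) = U(rX)/\sqrt{H(r)/r^n}$, note $H_{\widetilde U_r}(1)=1$ and $N_{\widetilde U_r}(1)=N(r)\le C$, extract a nontrivial weighted-$H^1$ limit that is a global homogeneous solution, and conclude a finite vanishing order — to get the strong unique continuation property on all of the connected domain.

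The main obstacle I anticipate is step (1) at the hyperplane $\{y=0\}$: one must justify that the weighted flux $|y|^{1-2s}\partial_y U$ has no jump there and that all the integration-by-parts manipulations are legitimate despite the weight degenerating ($s<1/2$) or blowing up ($s>1/2$). This requires the right regularity theory for $A_2$-weighted degenerate elliptic equations — trace embeddings, Caccioppoli inequalities, and boundary Harnack — which is exactly the Fabes–Kenig–Serapioni theory cited in the introduction; the evenness of the weight is what lets one reflect and treat $\{y=0\}$ as an interior set, killing the boundary term. A secondary technical point is ensuring the blow-up limit is genuinely nontrivial, which as the introduction notes fails in $L^\infty$ but succeeds once one upgrades to convergence in a weighted Hölder space via the compactness results of \cite{FKS} and \cite{JLX}; here, however, since $U$ is defined on a set of full measure (unlike the trace $u$ in the Fall–Felli setting), the weighted-$H^1$ blow-up is already enough to conclude, which simplifies matters considerably.
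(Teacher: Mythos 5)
Your proposal follows the same route as the paper: invoke the almost-monotonicity of the frequency (Theorem 1.1, proved as Theorem 3.6), deduce from it the doubling inequality $H(2r)\le C\,H(r)$ (Corollary 3.9), and iterate the doubling to force $U\equiv 0$ on $B_1$ from an assumed infinite vanishing order; the paper stops there, obtaining $U=0$ in $B_1$ directly from the iteration, so your step (4) is not needed. One conceptual correction is worth making: the ``structural cancellation'' referenced in the introduction is not the evenness/transmission condition at $\{y=0\}$ (that is automatic once the equation is imposed distributionally across the hyperplane, and poses no real obstacle given the Fabes--Kenig--Serapioni $A_2$ theory), but rather the scalar weight $\mu(z)=\langle A(z)z,z\rangle/|z|^2$ inserted into $H(r)=\int_{\partial B_r}|y|^{1-2s}\mu\,U^2$ and the vector field $\beta(z)=A(z)z/\mu(z)$ used in the Rellich--Pohozaev identity---these are what make the boundary terms close up in Lemmas 3.3--3.5; your unweighted $H$ (and the factor $n/r$ rather than $(n+1-2s)/r$) would not yield the stated differential identities, though since you defer the monotonicity to Theorem 1.1 this does not affect the logic of the reduction.
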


\begin{rem}After the completion of this paper, the author learned from Professor H. Koch about the work of R\"uland \cite{Ru}, where she obtained similar results of this type for equations with $C^2$ coefficients in $\R$ via Carleman-type estimates and some ideas of Koch-Tataru \cite{KT}. She also studied  equations involving very rough potentials. \end{rem}

Combining these two previous theorems we can follow the blow-up analysis as in Fall-Felli, and establish the following main result of this work:

\begin{thm}Suppose $s\in (0,1)$, and $L=div(A(\cdot)\nabla)$ is a second-order uniformly elliptic operator with Lipschitz coefficients. 

If $u\in H^s(\R)$ satisfies  $$L^su=0 \text{ in an open $U\subset\R$},$$  and $$u(x_0)=0,$$ then the vanishing order of $u$ at $x_0$ is finite. \end{thm}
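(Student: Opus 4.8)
The plan is to pass to the local, degenerate equation via the Stinga--Torrea extension, use the almost-monotone frequency of Theorem 1.1 to control the rate at which the extension $U$ concentrates at the origin, and then run a Fall--Felli-type blow-up \emph{in a topology strong enough to detect the trace on the hyperplane $\{y=0\}$}, from which a polynomial lower bound for the vanishing of $u$ itself can be read off. As a first reduction: after translating $x_0$ to the origin, restricting to a ball on which $L^su=0$ and rescaling it to $B_1$, and applying a linear change of the $x$-variables sending $A(0)$ to the identity (which preserves Lipschitz ellipticity, the notion of vanishing order, and --- since $L^s$ is defined spectrally and transforms naturally under linear isomorphisms --- the equation $L^su=0$), we may assume $A(0)=\mathrm{id}$, that $u$ is nontrivial, and that $L^su=0$ in $B_1$; we then argue by contradiction, supposing $u$ vanishes at infinite order at $0$. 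The Stinga--Torrea extension produces $U$ on $\R\times(0,\infty)$ solving $\mathrm{div}_{x,y}(y^{1-2s}\widetilde A\nabla U)=0$, with $\widetilde A=\mathrm{diag}(A,1)$, trace $U(\cdot,0)=u$, and conormal condition $\lim_{y\to 0}y^{1-2s}\partial_y U=c_s L^s u$, which vanishes on $B_1$; reflecting $U$ evenly in $y$ turns it into a genuine solution of $\mathrm{div}(|y|^{1-2s}\widetilde A\nabla U)=0$ in a full ball $\mathcal B\subset\mathbb R^{n+1}$ centered at the origin, with $U|_{\{y=0\}}=u$ near $0$.

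Write $z=(x,y)$, $B_r^+=B_r\cap\{y>0\}\subset\mathbb R^{n+1}$, and $S_r^+=\partial B_r\cap\{y>0\}$. Let $N(r)=rD(r)/H(r)$ be the frequency of $U$ from Section 3, with $D(r)=\int_{B_r^+}|y|^{1-2s}\langle\widetilde A\nabla U,\nabla U\rangle\,dz$ and $H$ the scale-normalized weighted height, $H(r)=r^{-(n+1-2s)}\int_{S_r^+}|y|^{1-2s}U^2\,d\sigma$. One first checks $H(r)>0$ for all small $r$: otherwise either $H(r_0)=0$ for some $r_0<1$, in which case an integration by parts (using that the conormal term vanishes on $B_{r_0}\subset B_1$) forces $D(r_0)=0$ and hence $U\equiv0$ on the open set $B_{r_0}^+$, or $u\equiv0$ near $0$, in which case $U$ has vanishing Cauchy data on a boundary piece and Theorem 1.2 applies; in either case classical unique continuation in the nondegenerate region $\{y>0\}$ then yields $U\equiv0$, hence $u\equiv0$, a contradiction. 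By Theorem 1.1, $e^{Cr}N(r)$ is nondecreasing, so $N$ has a finite limit $\kappa\ge0$ and $N(r)\le M$ for all small $r$; feeding this into the standard first-variation identity $\tfrac{d}{dr}\log H(r)=\tfrac{2N(r)}{r}+O(1)$ from Section 3 and integrating gives a doubling bound $H(2r)\le CH(r)$ and a polynomial lower bound $H(r)\ge c\,r^{2M}$.

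Now set $U_r(z)=U(rz)/\sqrt{H(r)}$, so that $H_{U_r}(1)=1$. The bound $N(r)\le M$ together with doubling bounds $\{U_r\}$ uniformly in the weighted Sobolev space $H^1(B_2^+,|y|^{1-2s})$; since $|y|^{1-2s}$ is a Muckenhoupt $A_2$ weight for $s\in(0,1)$, the De Giorgi--Nash--Moser and boundary estimates for degenerate equations of \cite{FKS}\cite{JLX} upgrade this to a uniform $C^\alpha(\overline{B_1^+})$ bound; hence, along a subsequence, $U_r\to U_0$ in $C^{\alpha'}_{\mathrm{loc}}(\overline{B_1^+})$ and weakly in the weighted $H^1$, and in particular $u=U|_{\{y=0\}}$ is continuous. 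Since $\widetilde A(r\,\cdot)\to\widetilde A(0)=\mathrm{id}$, the limit $U_0$ solves $\mathrm{div}(|y|^{1-2s}\nabla U_0)=0$ across $\{y=0\}$; uniform convergence on $\overline{B_1^+}$ gives $H_{U_0}(1)=1$, so $U_0\not\equiv0$, and as an even-in-$y$ solution of this limiting equation is the unique Caffarelli--Silvestre extension of its trace, $u_0:=U_0|_{\{y=0\}}\not\equiv0$. Finally $U_r|_{\{y=0\}}=u(r\,\cdot)/\sqrt{H(r)}\to u_0$ uniformly on $B_{1/2}$, so $\sup_{B_{r/2}}|u|\ge c\sqrt{H(r)}\ge c'\,r^{M}$ along the subsequence, contradicting the assumed infinite-order vanishing of $u$ at $0$. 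This completes the proof.

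The main obstacle is exactly the last cluster of steps. The frequency bound gives only weak $H^1$ compactness of the blow-ups $U_r$, a topology that is blind to the trace on the null set $\{y=0\}$ --- the difficulty flagged in the introduction --- so the argument has to be routed through the degenerate-elliptic regularity of \cite{FKS}\cite{JLX} to get convergence up to $\{y=0\}$ and to identify $U_0$ as a bona fide extension with nontrivial trace $u_0$. A secondary technical point is the careful accounting of the error terms (the factor $e^{Cr}$ in Theorem 1.1 and the $O(1)$ in the $H$-identity) needed to pass from almost-monotonicity to the two-sided polynomial control of $H$ used above.
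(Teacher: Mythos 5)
Your overall route mirrors the paper's: reduce to $A(0)=\mathrm{id}$, pass through the Stinga--Torrea/even reflection to the local degenerate equation, use Theorem 1.1 to bound the frequency, extract a blow-up limit $U_0$ via weighted $H^1$ boundedness plus the $A_2$-Hölder compactness of \cite{FKS}\cite{JLX}, observe $U_0$ solves the constant-coefficient extension equation, and convert nontriviality of the trace $u_0$ plus the lower bound on $H$ into a polynomial lower bound for $u$. The paper carries out these same steps (Section 4), with $U_\tau(z)=U(\tau z)/\sqrt{H(\tau)/\tau^{n+1-2s}}$ and the inequality chain through $H(r)\ge Cr^{n+1-2s+2\gamma+\delta}$ of Corollary 3.12.

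There is, however, one genuine gap. The step that makes the whole argument work is the nontriviality of the \emph{trace} $u_0=U_0|_{\{y=0\}}$, not merely of $U_0$ itself; and this is exactly the point the introduction flags as subtle, since $\{y=0\}$ is a null set. You justify $u_0\not\equiv 0$ by asserting that an even-in-$y$ solution of $\mathrm{div}(|y|^{1-2s}\nabla U_0)=0$ in $B_1$ ``is the unique Caffarelli--Silvestre extension of its trace.'' That uniqueness statement holds on the half-space $\mathbb{R}^n\times(0,\infty)$ with a global integrability/decay condition; it is false as stated on a bounded ball, where the Dirichlet trace on $B_1'$ does not determine $U_0$ without lateral boundary data on $\partial B_1\cap\{y>0\}$. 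What is actually needed is a strong unique continuation statement for the limiting degenerate equation with vanishing \emph{Cauchy data} (both $U_0$ and $\lim_{y\to 0^+}y^{1-2s}\partial_y U_0$ vanishing on $B_1'$). The paper obtains this by invoking Theorem~4.1 of Fall--Felli, which does a genuine spectral/Neumann-eigenvalue blow-up analysis of $U_0$ at the origin and produces a nontrivial spherical profile $\psi$, giving $u_0\not\equiv 0$ together with the exact rate $\gamma_0$; the comparison $\gamma_0\le\gamma$ via weak lower semicontinuity is then used to align this rate with the one controlling $H(r)$. If you want to avoid Fall--Felli entirely, you must prove (or cite) that vanishing Cauchy data on $B_1'$ forces $U_0\equiv 0$ for the $A=\mathrm{id}$ degenerate equation; this is not an instance of ``CS extension uniqueness,'' and you should supply that argument.

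Two smaller points. First, your final estimate is phrased as $\sup_{B_{r/2}}|u|\ge c\sqrt{H(r)}$, but the vanishing order in Definition~2.1 is an $L^2$-average quantity; you should convert to the $L^2$ bound $\int_{B_r'}u^2\ge c\,H(r)\,r^{2s-1}$ using the locally uniform convergence of $u(r\cdot)/\sqrt{H(r)/r^{n+1-2s}}\to u_0$ near a point where $u_0\ne 0$, exactly as the paper does. Second, your height function omits the weight $\mu(z)=\langle A(z)z,z\rangle/|z|^2$ that appears in the paper's $H(r)$ and is what makes the first-variation identity of Lemma~3.4 close up; this is harmless in the blow-up stage since $\mu\to 1$ as $r\to 0$, but for the frequency monotonicity itself the $\mu$-weighted $H$ is essential, so be careful to use the same $H$ as in Theorem~1.1 when invoking it.
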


Some modification of the argument would give a similar result in bounded domains: 

\begin{thm} Let $\Omega$ be a bounded  Lipschitz domain in $\R$. $s\in (0,1)$. $L=div(A(\cdot)\nabla)$ is a second-order elliptic operator with Lipschitz coefficients. 

If $u\in C^{2,\alpha}_0(\Omega)\cap H^s(\Omega)$ satisfies $$L^su=0 \text{ in an open $U\subset\Omega$},$$ and $$u(x_0)=0,$$ then the vanishing order of $u$ at $x_0$ is finite. 

\end{thm}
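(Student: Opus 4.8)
The plan is to reduce Theorem~1.4 to the blow-up analysis of Fall--Felli \cite{FF} that underlies Theorem~1.3, the only genuinely new ingredient being the extension on a bounded domain. Since the conclusion is local at $x_0$, I would first normalize: translate so that $x_0=0$, then apply the linear change of variable $x=A(0)^{1/2}z$. This turns $L$ into another Lipschitz uniformly elliptic operator $\tilde L=div(\tilde A(\cdot)\nabla)$ with $\tilde A(0)=id$ on the bounded Lipschitz domain $A(0)^{-1/2}\Omega$, having the same spectrum as $L$ (the Jacobian cancelling between the Dirichlet energy and the $L^2$ norm); hence $L^s$ transforms compatibly and $L^su=0$ is preserved, while the vanishing order at $x_0$ and the Lipschitz and ellipticity bounds are unchanged. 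After this reduction I may assume $A(0)=id$, that $x_0=0$ belongs to the open set on which $L^su=0$ (the statement being vacuous otherwise), and that $u$ is a nontrivial solution; moreover $u$ does not vanish identically near $0$, for then the extension below would have zero Cauchy data on a portion of $\{y=0\}$, and a unique continuation argument for the reflected equation, of the type behind Theorem~1.2 together with propagation, would force $u\equiv0$. In particular the height function $H$ below is positive for small radii.

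I would then invoke the extension. As $u\in C^{2,\alpha}_0(\Omega)\cap H^s(\Omega)$, the Stinga--Torrea procedure \cite{S}\cite{ST} (see also \cite{CSt}\cite{JLX}) produces $U\in H^1\big(\Omega\times(0,\infty);\,y^{1-2s}\,dx\,dy\big)$ solving
$$div_{x,y}\big(y^{1-2s}\,\bar A\,\nabla_{x,y}U\big)=0\qquad\text{in }\Omega\times(0,\infty),$$
where $\bar A=\mathrm{diag}(A(x),1)$ is Lipschitz, uniformly elliptic, independent of $y$ and equal to $id$ at the origin; $U$ has trace $U(\cdot,0)=u$, zero lateral Dirichlet datum on $\partial\Omega\times(0,\infty)$ coming from $u\in C^{2,\alpha}_0$, and satisfies the weighted Neumann identity $-\lim_{y\to0}y^{1-2s}\partial_yU=c_s\,L^su$. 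The $C^{2,\alpha}$ regularity of $u$ makes $U$ regular enough up to $\{y=0\}$ near $0$ for the quantities below to be classical. Since $L^su=0$ near $0$, the right-hand side of the Neumann identity vanishes there, so on a half-ball $B_\rho^+$ around $(0,0)$ the function $U$ solves the degenerate/singular equation with zero weighted Neumann data on $B_\rho\cap\{y=0\}$; because $\bar A$ is block-diagonal and $y$-independent, the even reflection $U(x,-y):=U(x,y)$ then solves
$$div\big(|y|^{1-2s}\,\bar A\,\nabla U\big)=0\qquad\text{in }B_\rho\subset\mathbb{R}^{n+1},$$
an equation of the type covered by Theorems~1.1 and~1.2.

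From here the argument coincides with that of Theorem~1.3. I would form the Almgren-type frequency $N_U(r)$ of $U$ at $(0,0)$, with numerator a weighted Dirichlet energy and denominator $H(r)=\int_{\partial B_r^+}y^{1-2s}U^2$. The almost monotonicity established in Theorems~1.1 and~1.2 applies to the reflected equation and shows $e^{Cr}N_U(r)$ is nondecreasing; hence $\kappa:=\lim_{r\to0^+}N_U(r)$ exists, is finite, and $N_U(r)\le C_0$ for small $r$. The usual differential identity for $H$ then gives a doubling bound, so the normalized rescalings $U_r(Z)=U(rZ)\big/\sqrt{r^{2s-n-1}H(r)}$ obey $\int_{\partial B_1^+}y^{1-2s}U_r^2=1$ and $\int_{B_1^+}y^{1-2s}|\nabla U_r|^2=N_U(r)\le C_0$, while solving $div\big(|y|^{1-2s}\bar A(rZ)\nabla U_r\big)=0$ with $\bar A(rZ)\to id$ and vanishing weighted Neumann data on $B_1\cap\{y=0\}$. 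By the regularity and compactness theory for degenerate elliptic equations \cite{FKS}\cite{JLX}, along a subsequence $U_r\to U_0$ in a H\"older space up to $\{y=0\}$, so that $u_r:=U_r(\cdot,0)\to u_0:=U_0(\cdot,0)$. The limit $U_0$ is a nontrivial solution of the model equation $div\big(|y|^{1-2s}\nabla U_0\big)=0$ that is homogeneous of degree $\kappa$ (constancy of the frequency in the limit), hence of the form $|Z|^\kappa\Psi(Z/|Z|)$ with $\Psi$ an eigenfunction of the weighted Neumann eigenproblem on the sphere analyzed in \cite{FF}; that analysis gives $u_0\not\equiv0$, since otherwise $U_0$ would have vanishing Dirichlet and weighted Neumann data on $B_1\cap\{y=0\}$ and would vanish identically. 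Consequently $u$ vanishes at $x_0$ to the finite order $\kappa$, which is the assertion.

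I expect the main obstacle to lie not in the blow-up, which is identical to the situation of Theorem~1.3, but in the careful set-up of the extension on a bounded Lipschitz domain: one must check that the Stinga--Torrea extension of $u\in C^{2,\alpha}_0(\Omega)\cap H^s(\Omega)$ is regular enough up to the characteristic hyperplane $\{y=0\}$ near the \emph{interior} point $x_0$ for the frequency and the pointwise Neumann identity to be meaningful, and that the lateral boundary $\partial\Omega\times(0,\infty)$ is irrelevant to the purely local analysis around $(x_0,0)$ --- which is precisely why the hypothesis $u\in C^{2,\alpha}_0(\Omega)$ is imposed. A secondary delicate point, already flagged in the introduction, is the passage from finite vanishing order of the extension $U$ to that of its trace $u$, which relies on the classification of homogeneous blow-up profiles from \cite{FF}.
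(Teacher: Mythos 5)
Your proposal follows essentially the same route as the paper: the paper's Section 5 simply observes that the spectral definition, the Stinga--Torrea/Caffarelli--Stinga extension and the even reflection all work on a bounded Lipschitz domain, and that the arguments of Sections 3--4 (frequency almost-monotonicity, doubling, blow-up with weighted H\"older compactness, Fall--Felli analysis of the limit) only use the \emph{local} reflected equation near $(x_0,0)$, so they apply verbatim; your normalization $A(0)=id$ and the reduction to a ball are exactly the paper's ``without loss of generality'' step, and your treatment of nontriviality of $H$ is, if anything, more explicit than the paper's.

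The one place you over-claim is the assertion that the blow-up limit $U_0$ is homogeneous of degree $\kappa$ ``by constancy of the frequency in the limit.'' Constancy of $N_{U_0}$ requires convergence of the rescaled weighted Dirichlet energies, i.e.\ strong $H^1(|y|^{1-2s}dz)$ convergence of $U_\tau$, which your sketch does not establish (it is provable by a Caccioppoli-type comparison of the equations for $U_\tau$ and $U_0$, but it is not free). The paper deliberately sidesteps this: it uses only weak lower semicontinuity to get $\gamma_0=\lim_r N_{U_0}(r)\le\gamma$ and then applies Theorem 4.1 of Fall--Felli to $U_0$ (a solution of the constant-coefficient extension equation with vanishing weighted Neumann data) to obtain a nontrivial asymptotic profile of the trace $u_0$ near $0$ --- consistent with its remark that no classification of blow-ups is claimed for variable coefficients. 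Without homogeneity, your argument that $u_0\not\equiv 0$ does not by itself rule out $u_0$ vanishing identically in a small ball around $0$, which is what the final step needs; so either supply the strong convergence or follow the paper's weaker route. Finally, the quantitative conclusion for $u$ itself rests on a polynomial lower bound for $H(r)$ (the paper's Corollary 3.12, or iterating the doubling inequality you mention), which you leave implicit but which is needed to convert the nondegeneracy of the normalized blow-ups into finiteness of the vanishing order.
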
 

Here $H^s(\Omega)$ is the fractional Sobolev space in domains. 

The definitions of $L^s$  and vanishing orders are given in the next section. $C^{2,\alpha}_0(\Omega)$ is the space of $C^{2,\alpha}(\Omega)$ functions that vanish on $\partial\Omega$.

Compare to the work of Fall-Felli, we do not give blow-up asymptotics around nodal points. Such properties might depend on finer structural conditions on the matrix $A$ to ensure that blow-ups along different sequences would converge to the same limit. 

This paper is organized as follows: In the second section we give some preliminaries about unique continuation properties, and prove some estimates concerning the extension that will be useful for the rest of the paper. We define the Almgren-type frequency function in the next section, and prove its almost monotonicity.  As a byproduct, we obtain the strong unique continuation property for degenerate/ singular elliptic equations with the weight $y^{1-2s}$. Then in the fourth section a blow-up analysis is given around nodal points that establishes Theorem 1.3. In the last section we show how the argument can be modified to give the result concerning bounded domains.

%%%%%%%%%%%%%%%%%%%%%%%%%%%%%%%%%%%%%%%%%%%%%%%%%%%%%%%%%%%%%%%%%%%%%%%%%%%%%%%%%%%%%%%%%%%%%%%%%%%%%%%%
\section{Preliminaries and the extension procedure}
First we give definitions concerning the strong and the weak unique continuation properties \cite{HL}. 
\begin{defi}
For $u\in\mathcal{L}^2(B_1)$ with $u(0)=0$, we define its vanishing order at $0$ to be $$d:=\sup\{k:\limsup_{r\to 0^+} \frac{1}{r^k}(\frac{1}{|B_r|}\int_{B_r}u^2dx)^{1/2}=0\}.$$
\end{defi} 

\begin{defi}
An equation satisfies the \textit{strong unique continuation property} if nontrivial solutions to this equation cannot have infinite vanishing order at any point. 

An equation satisfies the \textit{weak unique continuation property} if nontrivial solutions to this equation cannot vanish in nonempty open sets. 
\end{defi}

\begin{rem}
The strong unique continuation property implies the weak unique continuation property.
\end{rem} 

The following is needed for the extension procedure. For details the reader should consult Stinga-Torrea \cite{ST}.

\begin{defi}
Let $L$ be a densely defined positive linear operator on $\mathcal{L}^{2}(\Omega,d\eta)$ with the spectral resolution $E$. $s\in(0,1)$. 

The heat diffusion semigroup $e^{-tL}: \mathcal{L}^{2}(\Omega,d\eta)\to  \mathcal{L}^{2}(\Omega,d\eta)$ is defined by $$e^{-tL}=\int_0^{\infty} e^{-t\lambda}dE(\lambda).$$

The spectral fractional operator $L^s$, with domain $Dom(L^s)$, is defined by $$L^s=\int \lambda^sdE(\lambda).$$

\end{defi}

The main result in Stinga-Torrea \cite {ST} is 
\begin{thm}For $s\in (0,1)$ and $u\in Dom(L^s)$. 

A solution of the extension problem $$\begin{cases}-L_xU+\frac{1-2s}{y}\frac{\partial}{\partial y}U+\frac{\partial^2}{\partial y^2}U =0 &\text{ in $\Omega\times (0,\infty)$ },\\ U(x,0) =u(x) &\text{ in $\Omega$}\end{cases}$$ is given by  \begin{align*}U(x,y)&=\frac{1}{\Gamma(s)}\int_0^{\infty} e^{-tL}(L^s u)(x)e^{-\frac{y^2}{4t}}\frac{dt}{t^{1-s}}\\&=\frac{y^{2s}}{4^s\Gamma(s)}\int_0^{\infty}e^{-tL}u(x)e^{\frac{-y^2}{4t}}\frac{dt}{t^{1+s}}\end{align*} where $\Gamma$ is the Gamma function.

Moreover, \begin{align*}\lim_{y\to 0^+}\frac{U(x,y)-U(x,0)}{y^{2s}}&=\frac{\Gamma(-s)}{4^s\Gamma(s)}L^su(x)\\&=\frac{1}{2s}\lim_{y\to 0^+}y^{1-2s}\frac{\partial}{\partial y}U(x,y).
\end{align*}
\end{thm}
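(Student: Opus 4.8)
The plan is to diagonalize the extension equation against the spectral resolution $E$ of $L$, thereby reducing the PDE to a one-parameter family of ordinary differential equations in $y$. Concretely, I would make the ansatz $U(x,y)=\int_0^\infty \varphi_\lambda(y)\,dE(\lambda)u(x)$ and require each profile $\varphi_\lambda$ to solve
$$\varphi_\lambda''(y)+\tfrac{1-2s}{y}\,\varphi_\lambda'(y)-\lambda\,\varphi_\lambda(y)=0,\qquad \varphi_\lambda(0)=1,\qquad \sup_{y>0}|\varphi_\lambda(y)|<\infty .$$
Applying $L_x$ formally under the spectral integral replaces $L_x$ by multiplication by $\lambda$, so $U$ solves the extension problem exactly when every $\varphi_\lambda$ solves this ODE, while the boundedness requirement forces $U(\cdot,y)\in\mathcal L^2(\Omega,d\eta)$ with $U(\cdot,0)=u$.

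Next I would solve the ODE explicitly. The substitutions $z=\sqrt\lambda\,y$ and then $\varphi_\lambda=z^s w(z)$ bring it to the modified Bessel equation $z^2w''+zw'-(z^2+s^2)w=0$ of order $s$, whose only solution bounded (in fact exponentially decaying) at infinity is a multiple of $K_s$; normalizing via $K_s(z)\sim\tfrac12\Gamma(s)(z/2)^{-s}$ as $z\to 0^+$ gives $\varphi_\lambda(y)=\tfrac{2^{1-s}}{\Gamma(s)}(\sqrt\lambda\,y)^s K_s(\sqrt\lambda\,y)$. Inserting Macdonald's integral representation $K_\nu(z)=\tfrac12(z/2)^\nu\int_0^\infty e^{-t-z^2/4t}\,t^{-1-\nu}\,dt$ with $\nu=s$ and rescaling $t$ yields $\varphi_\lambda(y)=\tfrac{y^{2s}}{4^s\Gamma(s)}\int_0^\infty e^{-\lambda t}e^{-y^2/4t}\,t^{-1-s}\,dt$, while the choice $\nu=-s$ (legitimate since $K_s=K_{-s}$) followed by rescaling gives $\varphi_\lambda(y)=\tfrac{\lambda^s}{\Gamma(s)}\int_0^\infty e^{-\lambda t}e^{-y^2/4t}\,t^{s-1}\,dt$. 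Integrating these two expressions against $dE(\lambda)u$ and recognizing $\int e^{-\lambda t}\,dE(\lambda)u=e^{-tL}u$ and $\int \lambda^s e^{-\lambda t}\,dE(\lambda)u=e^{-tL}(L^su)$ produces the two displayed formulas for $U$; the exchange of the spectral integral with the $t$-integral is justified by Minkowski's integral inequality and the spectral theorem, and the hypothesis $u\in Dom(L^s)$ is exactly what makes the first representation meaningful.

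For the ``moreover'' part I would use the refined small-argument expansion $K_s(z)=\tfrac12\Gamma(s)(z/2)^{-s}+\tfrac12\Gamma(-s)(z/2)^{s}+O(z^{2-s})$, valid for $s\in(0,1)$ with no logarithmic term, which gives
$$\varphi_\lambda(y)=1+\frac{\Gamma(-s)}{4^s\Gamma(s)}\,\lambda^s y^{2s}+O(\lambda y^2)\qquad (y\to 0^+),$$
together with the global bound $|\varphi_\lambda(y)-1|\le C\lambda^s y^{2s}$ (check the ranges $\sqrt\lambda\,y\le 1$ and $\sqrt\lambda\,y\ge 1$ separately, using in the second that $\varphi_\lambda$ is bounded there). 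Since $\lambda\mapsto\lambda^s$ is square-integrable against $d\langle E(\lambda)u,u\rangle$ precisely because $u\in Dom(L^s)$, dominated convergence (in $\mathcal L^2(\Omega,d\eta)$) lets me pass to the limit in
$$\frac{U(x,y)-U(x,0)}{y^{2s}}=\int_0^\infty \frac{\varphi_\lambda(y)-1}{y^{2s}}\,dE(\lambda)u(x)$$
and obtain $\tfrac{\Gamma(-s)}{4^s\Gamma(s)}\int \lambda^s\,dE(\lambda)u=\tfrac{\Gamma(-s)}{4^s\Gamma(s)}L^su$. Differentiating the same expansion in $y$ gives $y^{1-2s}\partial_yU(x,y)\to 2s\cdot\tfrac{\Gamma(-s)}{4^s\Gamma(s)}L^su$, which is the stated identity $\tfrac1{2s}\lim_{y\to0^+}y^{1-2s}\partial_yU=\lim_{y\to0^+}\tfrac{U-U(\cdot,0)}{y^{2s}}$.

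The main obstacle is not the ODE analysis but the functional-analytic bookkeeping: one must show that the spectral integral defines, for each $y>0$, an element of $Dom(L)$ depending on $y$ smoothly enough to differentiate twice under the integral sign and to verify the PDE in the strong sense, and one must make rigorous every interchange of $\int dE(\lambda)$ with $y$-limits, $y$-derivatives, and the $t$-integrals. All of this is handled by combining the spectral theorem with dominated-convergence and Minkowski-type estimates, as carried out in Stinga--Torrea \cite{ST}.
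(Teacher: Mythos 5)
The paper does not give its own proof of this theorem; it is quoted verbatim as the main result of Stinga--Torrea \cite{ST}, with the reader referred there for details. Your spectral-resolution-plus-Bessel-ODE argument is correct and is in fact the method of that reference: diagonalizing $L$ reduces the extension PDE to $\varphi_\lambda''+\tfrac{1-2s}{y}\varphi_\lambda'-\lambda\varphi_\lambda=0$, whose bounded solution normalized at $y=0$ is $\tfrac{2^{1-s}}{\Gamma(s)}(\sqrt{\lambda}\,y)^sK_s(\sqrt{\lambda}\,y)$, and the two heat-semigroup formulas together with the weighted Neumann limit then follow from Macdonald's integral representation and the small-argument expansion of $K_s$, exactly as you carry out.
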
 

It is interesting to see such general result, however, with the exception of the last section of this paper we consider only $\mathcal{L}^{2}(\R)$ equipped with the Lebesgue measure.  Also our linear operator is of the form $u\to -div(A(\cdot)\nabla u)$ or the negative of it, where $A$ is Lipschitz and uniformly elliptic. 

For such operators, we can say much more about their extensions:

\begin{prop}
Let $\Omega$ be either the entire $\R$ or  a Lipschitz domain, and L be of the form $Lu(\cdot)=-div(A(\cdot)\nabla u(\cdot))$ where $A$  is Lipschitz and uniformly elliptic with ellipticity constants $0<\lambda\le\frac{1}{\lambda}<\infty$. 

For $u\in H^s(\Omega)$, the extension given in Theorem 2.5, $U$, is in $H^{1}_{loc}(y^{1-2s}dxdy,\Omega\times (0,\infty))$ with $$\|U\|_{H^1(y^{1-2s}dxdy, \Omega\times (0,R))}\le C \|u\|_{H^s(\Omega)},$$ where $C$ depends on the ellipticity, the spatial dimension, the Lipschitz size of $\Omega$ and the width $R>0$.
 \end{prop}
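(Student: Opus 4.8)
The plan is to obtain the weighted $H^1$ bound on $U$ directly from the two integral representations in Theorem 2.5, using heat-semigroup estimates for $L$ together with the spectral characterization of $H^s(\Omega)$. First I would fix the spectral resolution $E$ of $L$ and write $u = \int dE(\lambda)u$, so that $\|u\|_{H^s(\Omega)}^2 \sim \int (1+\lambda)^s\, d\|E(\lambda)u\|^2$ (this is the referee's characterization alluded to after Theorem 1.1). Plugging the representation $U(x,y) = \frac{1}{\Gamma(s)}\int_0^\infty e^{-tL}(L^s u)(x)\, e^{-y^2/4t}\, \frac{dt}{t^{1-s}}$ into the spectral picture, one gets $U(\cdot,y) = \int \varphi_s(\lambda y^2)\, dE(\lambda)u$ for an explicit profile $\varphi_s$ (a Bessel-type function, bounded by $1$, decaying at infinity), because the $t$-integral is a Laplace-type transform in $\lambda$. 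The point is that everything reduces to scalar estimates in $\lambda$.

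Next I would estimate the three pieces of $\|U\|_{H^1(y^{1-2s}dxdy,\,\Omega\times(0,R))}^2$ separately: $\int_0^R\!\int_\Omega y^{1-2s} U^2$, $\int_0^R\!\int_\Omega y^{1-2s}|\partial_y U|^2$, and $\int_0^R\!\int_\Omega y^{1-2s}|\nabla_x U|^2$. For the first, Plancherel in the spectral variable turns it into $\int \big(\int_0^R y^{1-2s}\varphi_s(\lambda y^2)^2\, dy\big) d\|E(\lambda)u\|^2$; the inner integral, after the substitution $\tau = \lambda y^2$, is bounded by $C\lambda^{s-1}\min(1,\dots)$, which against $d\|E(\lambda)u\|^2$ is controlled by $\|u\|_{\mathcal L^2}^2 \le \|u\|_{H^s}^2$. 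For the $\partial_y U$ term, $\partial_y U(\cdot,y) = \int 2\lambda y\, \varphi_s'(\lambda y^2)\, dE(\lambda)u$, and the same change of variables produces a weight $y^{1-2s}\cdot\lambda^2 y^2 = \lambda^2 y^{3-2s}$; integrating in $y$ gives a factor $\lambda^{s-2}\cdot\lambda = \lambda^{s-1}$... so again one lands on $\int \lambda^{s}\,d\|E(\lambda)u\|^2 \le \|u\|_{H^s}^2$ (the $s$-power emerges precisely from the $y$-integral), once one checks $\varphi_s'$ has enough decay. The $\nabla_x U$ term is the one that genuinely uses the structure of $L$: I would use the energy identity $\int_\Omega |\nabla_x(e^{-tL}v)|^2\, dx \le \Lambda\int_\Omega \langle A\nabla e^{-tL}v,\nabla e^{-tL}v\rangle = -\Lambda\tfrac{d}{dt}\tfrac12\|e^{-tL}v\|_{\mathcal L^2}^2$, equivalently $\int \lambda\, |e^{-t\lambda}\widehat v(\lambda)|^2$ in spectral terms, so that $\|\nabla_x U(\cdot,y)\|_{\mathcal L^2}^2 \sim \int \lambda\, \varphi_s(\lambda y^2)^2\, d\|E(\lambda)u\|^2$; the weighted $y$-integral then converts the extra $\lambda$ into the correct $\lambda^s$ up to a bounded factor.

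The main obstacle I expect is not any single one of these estimates but the \emph{localization}: the semigroup $e^{-tL}$ and the eigenfunctions of $L$ on a bounded Lipschitz $\Omega$ do not see the restriction to $\Omega\times(0,R)$ cleanly, and for $\Omega=\R$ one must make sure the profile $\varphi_s(\lambda y^2)$ genuinely controls the gradient in a \emph{local} weighted norm rather than a global one — here the cutoff in $y$ at height $R$ is what forces the explicit dependence of $C$ on $R$, since for large $y$ the $L^2$-in-$y$ integrals of $\varphi_s$ diverge without it. I would handle this by working on $\Omega\times(0,R)$ from the start, absorbing the $y\in(0,R)$ truncation into the scalar $y$-integrals (which is where the $\min(1,\cdot)$ and the $R$-dependence live), and citing the Hölder/Harnack theory for degenerate equations with $A_2$ weights from \cite{FKS} only if a pointwise bound on $U$ near $\{y=0\}$ is additionally needed; for the stated $H^1$ estimate the spectral computation should suffice. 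A secondary technical point is justifying Plancherel/Fubini interchanges when $s$ is close to $1$ (the equation is singular) versus close to $0$ (degenerate), which I would dispatch by first proving the estimate for $u$ in a spectral subspace where $\lambda$ is bounded and then passing to the limit by monotone convergence.
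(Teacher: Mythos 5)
Your route is genuinely different from the paper's, for the simple reason that the paper does not prove Proposition 2.6 at all: its ``proof'' is a citation to Caffarelli--Stinga \cite{CSt} for bounded Lipschitz domains and to Jin--Li--Xiong \cite{JLX} for $\Omega=\mathbb{R}^n$. What you sketch is essentially a reconstruction of the argument underlying those references (and Stinga--Torrea \cite{ST}): spectrally $U(\cdot,y)=\varphi_s(\lambda y^2)\,$ acting on $u$, with $\varphi_s(\rho)=\frac{2^{1-s}}{\Gamma(s)}\rho^{s/2}K_s(\sqrt{\rho})$ the Bessel profile satisfying $0\le\varphi_s\le 1$, $1-\varphi_s(\rho)\sim c\rho^s$ near $0$ and exponential decay at infinity, so that the three pieces of the weighted $H^1$ norm reduce to scalar $\lambda$-integrals; the truncation $y\in(0,R)$ takes care of the low-$\lambda$ regime of the zero-order term (this is exactly where the $R$-dependence of $C$ enters), and ellipticity converts $\|\nabla_x\cdot\|_{\mathcal{L}^2}$ into the quadratic form of $L$. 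This is sound in outline and arguably more informative than the paper's deferral. Three points need care. First, a bookkeeping slip in the $\partial_y U$ term: the $y$-integral produces $\lambda^{s-2}$ against the prefactor $\lambda^{2}$, i.e.\ $\lambda^{s}$, not $\lambda^{s-1}$ (your stated conclusion $\int\lambda^s\,d\|E(\lambda)u\|^2$ is nevertheless correct); convergence of the resulting $\rho$-integral rests on $\varphi_s'(\rho)\sim c\,\rho^{s-1}$ as $\rho\to0^+$, which should be verified from the $K_s$ asymptotics. Second, the bound $\|\nabla_x U(\cdot,y)\|_{\mathcal{L}^2}^2\le C\int\lambda\,\varphi_s(\lambda y^2)^2\,d\|E(\lambda)u\|^2$ uses that $U(\cdot,y)$ belongs to the form domain of $L$ and that the form domain is $H^1$ with comparable norms; say this explicitly, since it is the one place the divergence structure and self-adjointness of $L$ are used. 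Third, and most substantively, your starting identification $\|u\|_{H^s(\Omega)}^2\sim\int(1+\lambda)^s\,d\|E(\lambda)u\|^2$ is unproblematic for $\Omega=\mathbb{R}^n$ (form domain equals $H^1$, then interpolate), but on a bounded Lipschitz domain with Dirichlet spectrum the domain of $L^{s/2}$ is $H^s(\Omega)$ only for $s<1/2$; for $s\ge 1/2$ it is $H^s_0(\Omega)$ (the Lions--Magenes space at $s=1/2$), so the inequality ``spectral norm $\lesssim\|u\|_{H^s(\Omega)}$'' fails for general $u\in H^s(\Omega)$. The paper sidesteps this by citing \cite{CSt}, where the fractional space is the one adapted to the spectral resolution; if you want the proposition as stated for the standard $H^s(\Omega)$ in the bounded case, this identification (or an appropriate restriction on $u$, in the spirit of the hypotheses of Theorem 1.4) must be addressed rather than asserted.
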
 

Here and in the rest of the paper, for $E\subset\mathbb{R}^{n+1}$, $E^+$ denotes its intersection with $\{y>0\}$, and $E'$ denotes its intersection with the hyperplane $\{y=0\}$.

\begin{proof}
The proof for bounded Lipschitz domains can be found in Caffarelli-Stinga \cite{CSt}. The case for $\Omega=\R$ is in Jin-Li-Xiong \cite{JLX}.\end{proof} 

%\begin{rem}This implies that the extension $U$ agrees with the unique solution to $div(y^{1-2s}A(x)\nabla_{x,y}U)=0$ in $H^1(B_R^+;y^{1-2s}dxdy)$ with trace $u|_{B'_R}$, where we have used the same letter $A$ to denote the $n$-by-$n$ coefficient matrix as in Theorem 2.6, and the matrix obtained by adding a $1$ as the $(n+1)\times (n+1)$-entry to $A$. \end{rem}

The last proposition in this section says that when $u$ is the solution to $L^su=0$ in $\Omega'$, we can extend $U$ to the entire strip $\Omega'\times \mathbb{R}$ by an even reflection. This new function would solve an equation which allows us to work with balls instead of half balls. The case where $A=id$ is given in Caffarelli-Silvestre \cite{CS}. For the proof for this variable coefficient case, see Stinga-Zhang \cite{SZ}.

\begin{prop}
Under the same assumptions, and further assume $$L^su=0 \text{ in $B_1'\subset\R$},$$ then the extension to the whole space $$\tilde{U}(x,y)=\begin{cases}U(x,y) &\text{ if $y\ge 0$}\\ U(x,-y) &\text{ if $y< 0$}\end{cases}$$ is a solution to $$div(|y|^{1-2s}A(\cdot)\nabla U)=0 \text{ in $B_1\subset\mathbb{R}^{n+1}$}.$$
\end{prop}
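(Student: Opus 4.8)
The plan is to verify the equation $\operatorname{div}(|y|^{1-2s}A(\cdot)\nabla \tilde U)=0$ in $B_1$ by testing against an arbitrary $\varphi\in C_c^\infty(B_1)$ and splitting the integral over $B_1^+$ and $B_1^-$. First I would record what we already know: by Proposition 2.6 the extension $U$ lies in $H^1(|y|^{1-2s}dxdy,\, B_1^+)$, and by Theorem 2.5 it satisfies, in the weak sense, the equation $-L_xU + \frac{1-2s}{y}\partial_y U + \partial_{yy}U = 0$ in $B_1^+$ — equivalently, since $A(0)=id$ is irrelevant here and $L_x = -\operatorname{div}_x(A\nabla_x\cdot)$, we have $\operatorname{div}_{x,y}(y^{1-2s}A(x)\nabla_{x,y}U)=0$ in $B_1^+$, where $A$ is extended to act as the identity in the $y$-direction (i.e.\ the full $(n{+}1)\times(n{+}1)$ matrix is $\operatorname{diag}(A(x),1)$). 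The key extra input from the hypothesis $L^su=0$ in $B_1'$ is the Neumann condition supplied by the last displayed identity in Theorem 2.5: $\lim_{y\to 0^+} y^{1-2s}\partial_y U(x,y) = \frac{2s\,\Gamma(-s)}{4^s\Gamma(s)} L^s u(x) = 0$ for $x\in B_1'$.

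Next I would carry out the reflection computation. For $\varphi\in C_c^\infty(B_1)$, write
\[
\int_{B_1}|y|^{1-2s}\langle A\nabla\tilde U,\nabla\varphi\rangle\,dxdy
= \int_{B_1^+}y^{1-2s}\langle A\nabla U(x,y),\nabla\varphi(x,y)\rangle\,dxdy
+ \int_{B_1^+}y^{1-2s}\langle A\nabla U(x,y),\nabla\bar\varphi(x,y)\rangle\,dxdy,
\]
where in the second integral I have substituted $y\mapsto -y$, used that $\tilde U(x,-y)=U(x,y)$, that $\partial_y$ picks up a sign under this substitution in both $\tilde U$ and $\varphi$ so the $y$-derivative product is unchanged, and where $\bar\varphi(x,y):=\varphi(x,-y)$. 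Since $\bar\varphi$ is not compactly supported away from $\{y=0\}$ I cannot immediately use the weak formulation in $B_1^+$; instead I integrate over $\{y>\varepsilon\}$, integrate by parts there (legitimate because $U$ solves the equation classically in the open upper half space, by interior regularity for degenerate elliptic equations of \cite{FKS}), and collect the boundary term on $\{y=\varepsilon\}$, which is $-\int_{B_1'}\varepsilon^{1-2s}\partial_y U(x,\varepsilon)\,\bar\varphi(x,\varepsilon)\,dx$. The same is done for the genuinely compactly supported test function $\varphi$ on the upper side, producing the boundary term $-\int_{B_1'}\varepsilon^{1-2s}\partial_y U(x,\varepsilon)\,\varphi(x,\varepsilon)\,dx$ with the opposite orientation; these two boundary terms add to $-2\int_{B_1'}\varepsilon^{1-2s}\partial_y U(x,\varepsilon)\varphi(x,0)\,dx + o(1)$, which tends to $0$ as $\varepsilon\to 0^+$ precisely by the Neumann condition above. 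The interior integrals converge to $\int_{B_1^+}y^{1-2s}\langle A\nabla U,\nabla(\varphi+\bar\varphi)\rangle\,dxdy$, which vanishes since $\varphi+\bar\varphi$, suitably cut off near $\{y=0\}$, is an admissible test function for the equation satisfied by $U$ in $B_1^+$ (using that $U\in H^1(y^{1-2s}dxdy)$ to pass the cutoff to the limit). Hence the whole expression is $0$, i.e.\ $\tilde U$ is a weak solution in $B_1$.

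The main obstacle is making the boundary-term limit rigorous: the quantity $y^{1-2s}\partial_y U(x,y)$ is only known to converge to $0$ pointwise (or in a suitable trace sense) as $y\to 0$, while the test-function manipulation needs convergence of the integral $\int_{B_1'}\varepsilon^{1-2s}\partial_y U(x,\varepsilon)\varphi(x,\varepsilon)\,dx$. Controlling this requires knowing that $y^{1-2s}\partial_y U$ has an $\mathcal L^2_{loc}(B_1')$ trace and that the convergence $y^{1-2s}\partial_y U(\cdot,y)\to 0$ holds in that space; both follow from the regularity theory for the extension (e.g.\ via the energy bound in Proposition 2.6 together with the De Giorgi–Nash–Moser-type estimates of \cite{FKS} and the trace results in \cite{JLX}), but this is the point that needs care. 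Everything else is bookkeeping with the even symmetry and the change of variables $y\mapsto -y$. Since the statement is attributed to \cite{SZ} (and, for $A=id$, to \cite{CS}), I would simply cite those references for the detailed verification, noting that the computation is the standard Caffarelli–Silvestre even-reflection argument adapted to the variable-coefficient matrix $\operatorname{diag}(A(x),1)$.
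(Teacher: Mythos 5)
Your proposal is correct in outline, but note that the paper itself does not prove this proposition at all: its ``proof'' is a citation, namely Caffarelli--Silvestre \cite{CS} for $A=id$ and Stinga--Zhang \cite{SZ} for the variable-coefficient case. What you have written is a reconstruction of the standard even-reflection argument that those references carry out, so you are doing strictly more than the paper while ultimately deferring, as the paper does, to \cite{CS}\cite{SZ} for the delicate step. That delicate step is exactly the one you flag: the boundary term $\int_{B_1'}\varepsilon^{1-2s}\partial_y U(x,\varepsilon)\,\varphi(x,\varepsilon)\,dx$ must be shown to vanish, and the pointwise limit $\lim_{y\to 0^+}y^{1-2s}\partial_y U=c_s\,L^su=0$ from Theorem 2.5 is not by itself enough to pass to the limit in the integral. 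The clean way to close this (and the way the extension literature handles it) is to avoid pointwise traces altogether: for $u\in Dom(L^s)$ the extension $U$ satisfies a global weak formulation in which $y^{1-2s}\partial_y U(\cdot,y)$ converges to $c_s L^su$ in a weak (distributional, or dual Sobolev) sense as $y\to 0^+$, and testing that formulation with the even function $\varphi+\bar\varphi$ directly kills the boundary contribution on the set where $L^su=0$; alternatively one can use the energy bound of Proposition 2.6 to get $\mathcal{L}^2_{loc}$ convergence of the weighted normal derivative along a sequence $\varepsilon_j\to 0$, which also suffices. With that point either argued along these lines or, as you suggest, delegated to \cite{SZ}, your write-up is a sound and somewhat more informative substitute for the paper's citation-only treatment; one bookkeeping remark is that your aside ``$A(0)=id$ is irrelevant here'' is accurate, since that normalization is only used later for the frequency function, not for the reflection.
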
 

Here we have used the same letter $A$ to denote the $n$-by-$n$ coefficient matrix as in Theorem 2.6, and the matrix obtained by adding a $1$ as the $(n+1)\times (n+1)$-entry to $A$.
In the sequel we shall use the same letter $U$ to denote the extension given by Theorem 2.5 as well as its even extension to the entire $\mathbb{R}^{n+1}$.

%%%%%%%%%%%%%%%%%%%%%%%%%%%%%%%%%%%%%%%%%%%%%%%%%%%%%%%%%%%%%%%%%%%%%%%%%%%%%%%%%%%%%%%%%%%%%%%%%%%%%%%%
\section{The frequency function}

In this section, we assume that $A(0)=id$,   and that $u\in H^s(\R)$ satisfies $$L^su=0 \text{ in $B_1'$}.$$ We have seen that the extension $U$ solves a local PDE in $B_1\subset\mathbb{R}^{n+1}$. We define the frequency function and prove its almost monotonicity. 

Let $z=(x,y)$ denote points in $\mathbb{R}^{n+1}$. For $z\neq 0$ we define the following $$\mu(z):=\langle A(z)z,z\rangle/|z|^2,  \beta(z):=A(z)z/\mu(z).$$ The choice of these weights is very much inspired by Tao \cite{T} and Tao-Zhang \cite{TZ}, where the authors studied monotonicity properties for degenerate elliptic second order equations. The major difference is that they require radial symmetry of the weight, which we cannot afford. 

We collect a few properties of these two functions:
\begin{lem} For almost every $z\in\mathbb{R}^{n+1}$ and $r>0$,
$$\lambda\le\mu(z)\le\frac{1}{\lambda}.$$

$$|\beta(z)|\le\frac{1}{\lambda}|z|.$$
 $$|\frac{\partial}{\partial r}\mu(rz)|\le Lip_A.$$
$$\frac{\partial\beta_i}{\partial x_j}(z)=\delta_{ij}+O(|z|).$$
\end{lem}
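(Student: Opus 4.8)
The four assertions are all local, pointwise (a.e.) statements about the algebraic quantities $\mu(z)=\langle A(z)z,z\rangle/|z|^2$ and $\beta(z)=A(z)z/\mu(z)$, so the plan is simply to unwind the definitions and use the two structural hypotheses on $A$: uniform ellipticity with constants $\lambda\le 1/\lambda$, and Lipschitz continuity with constant $\mathrm{Lip}_A$, together with the normalization $A(0)=\mathrm{id}$. I would treat them one at a time.

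For the first bound, $\lambda\le\mu(z)\le 1/\lambda$: this is immediate from uniform ellipticity applied to the unit vector $z/|z|$, since $\mu(z)=\langle A(z)\,\tfrac{z}{|z|},\tfrac{z}{|z|}\rangle$. For the second, $|\beta(z)|\le \tfrac{1}{\lambda}|z|$: write $|\beta(z)|=|A(z)z|/\mu(z)$, bound $|A(z)z|\le \|A(z)\|\,|z|\le \tfrac1\lambda|z|$ using that the operator norm of a symmetric uniformly elliptic matrix is at most $1/\lambda$, and bound $\mu(z)\ge\lambda$ from the previous step — wait, that would give $\tfrac{1}{\lambda^2}|z|$, so instead one should observe more carefully that $|A(z)z|^2 = \langle A(z)z, A(z)z\rangle \le \tfrac1\lambda \langle A(z)z,z\rangle = \tfrac1\lambda\mu(z)|z|^2$ (using ellipticity with the vector $A(z)z$), whence $|\beta(z)| = |A(z)z|/\mu(z) \le \tfrac{1}{\sqrt{\lambda\,\mu(z)}}\,|z| \le \tfrac1\lambda|z|$ since $\mu(z)\ge\lambda$. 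For the third, $|\tfrac{\partial}{\partial r}\mu(rz)|\le \mathrm{Lip}_A$: compute $\mu(rz)=\langle A(rz)\tfrac{z}{|z|},\tfrac{z}{|z|}\rangle$, so $\tfrac{\partial}{\partial r}\mu(rz) = \langle \partial_r[A(rz)]\,\tfrac{z}{|z|},\tfrac{z}{|z|}\rangle$, and the matrix $\partial_r[A(rz)]$ has entries that are directional derivatives of $A$, each bounded in norm by $\mathrm{Lip}_A$; testing against the unit vector $z/|z|$ keeps the bound at $\mathrm{Lip}_A$ (for a.e.\ $r$, since $A$ is Lipschitz hence differentiable a.e.).

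The fourth assertion, $\tfrac{\partial\beta_i}{\partial x_j}(z)=\delta_{ij}+O(|z|)$, is where the normalization $A(0)=\mathrm{id}$ and the Lipschitz hypothesis genuinely interact, and I expect this to be the main obstacle — not because it is deep, but because one must differentiate the quotient $\beta_i = (A(z)z)_i/\mu(z)$ and keep track of which terms are $O(1)$ and which vanish at $z=0$. The key input is that Lipschitz continuity plus $A(0)=\mathrm{id}$ gives $A(z)=\mathrm{id}+O(|z|)$ and (a.e.) $\partial A/\partial x_j = O(1)$, hence $\mu(z)=1+O(|z|)$, $\partial\mu/\partial x_j = O(1)$, and $(A(z)z)_i = z_i + O(|z|^2)$ with $\partial_j(A(z)z)_i = \delta_{ij} + O(|z|)$ (the $O(|z|)$ coming from $\partial_j A(z)\cdot z$, which is first order, plus $(A(z)-\mathrm{id})_{ij}$, also first order). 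Then
\[
\frac{\partial\beta_i}{\partial x_j} = \frac{\partial_j(A(z)z)_i}{\mu(z)} - \frac{(A(z)z)_i\,\partial_j\mu(z)}{\mu(z)^2} = \bigl(\delta_{ij}+O(|z|)\bigr)\bigl(1+O(|z|)\bigr) - O(|z|)\cdot O(1) = \delta_{ij} + O(|z|),
\]
since the second term is a product of $(A(z)z)_i = O(|z|)$ with a bounded factor. The care needed is in justifying a.e.\ differentiability of $\beta$ (inherited from that of $A$ away from the origin, where the denominator $\mu$ is bounded below) and in the bookkeeping of the constants in the $O(\cdot)$ notation, which depend only on $\lambda$, the dimension, and $\mathrm{Lip}_A$ — exactly the constants the later monotonicity argument is allowed to use.
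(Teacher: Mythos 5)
Your proof is correct and follows essentially the same route as the paper, which dispatches the first two estimates in one line as ``direct consequences of the ellipticity of $A$,'' obtains the third from $\mu(rz)=\langle A(rz)z,z\rangle/|z|^2$ exactly as you do, and for the fourth differentiates $\beta_i = a_{ik}x_k/\mu$ by the product rule (you use the quotient rule on $(A(z)z)_i/\mu$, which is the same computation). One small correction on the second estimate: you rightly notice the naive chain $|A(z)z|/\mu(z)\le \lambda^{-1}|z|/\lambda$ only gives $\lambda^{-2}|z|$, and your repaired inequality $|A(z)z|^2\le \lambda^{-1}\langle A(z)z,z\rangle$ is indeed true, but the justification ``using ellipticity with the vector $A(z)z$'' is not quite right --- applying the ellipticity condition to $\xi=A(z)z$ gives $\lambda|A(z)z|^2\le\langle A\cdot A(z)z,A(z)z\rangle$, which is a different statement. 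The inequality you want follows from diagonalizing the symmetric matrix $A$ (with eigenvalues $d_i\in[\lambda,1/\lambda]$ one has $\sum d_i^2 w_i^2\le\lambda^{-1}\sum d_i w_i^2$), or equivalently from applying the ellipticity of $A^{-1}$ to the vector $v=A(z)z$. With that fix your argument is complete; for the third estimate, note as you implicitly do that the bound $\mathrm{Lip}_A$ is read with $|z|=1$ (i.e.\ on $\partial B_1$), which is how it is used in Lemma~3.3.
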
 

\begin{rem}
When there is no ambiguity, we shall not distinguish between statements that hold for all points or almost every points.
\end{rem} 
\begin{proof}
The first and the second estimates are direct consequences of the ellipticity of $A$.

The third follows from $\mu(rz)=\langle A(rz)rz,rz\rangle/|rz|^2=\langle A(rz)z,z\rangle/|z|^2$. Consequently $$\frac{\partial}{\partial r}\mu(rz)=\langle \partial_rA(rz)z,z\rangle/|z|^2.$$

For the last estimate, 
$$\frac{\partial}{\partial x_j}\beta_i-\delta_{ij}=(\frac{a_{ij}}{\mu(z)}-\delta_{ij})+\frac{\partial}{\partial x_j}(\frac{a_{ik}}{\mu(z)})x_k.$$ 
Now with \begin{align*}|\frac{a_{ij}}{\mu(z)}-\delta_{ij}|&\le |a_{ij}||\frac{1}{\mu(z)}-1|+|a_{ij}-\delta_{ij}|\\&\le C/\lambda|\langle (A(z)-id)z/|z|,z/|z|\rangle|+|a_{ij}-\delta_{ij}|\\&\le CLip_A|z|.\end{align*} Meanwhile $\frac{\partial}{\partial x_j}(\frac{a_{ik}}{\mu(z)})=\frac{\partial_j(a_{ik})}{\mu(z)}+a_{ik}\partial_j\mu(z)/\mu(z)^2$ is bounded due to the Lipschitz regularity of $A$, the fourth estimate follows.
\end{proof}

For $0<r<1$, the generalized height function is defined to be $$H(r):=\int_{\partial B_r}|y|^{1-2s}\mu(z)U(z)^2dH^{n}(z).$$ The generalized Dirichlet energy is $$D(r):=\int_{B_r}|y|^{1-2s}\langle A(z)\nabla U,\nabla U\rangle dz.$$ 

With the estimates in the previous section, it is easy to see that these functions are well-defined. Moreover, $H(r)=0$ would imply $U=0$ along $\partial B_r$ which would imply $U=0$ by the uniqueness of solution to the degenerate/ singular elliptic equation \cite{FKS}\cite{FKJ}. Consequently, $H(r)$ is nonvanishing for nontrivial $U$, and the following frequency function is well defined $$N(r):=rD(r)/H(r).$$
 
\begin{lem} 
$$H'(r)=(\frac{n+1-2s}{r}+O(1))H(r)+2\int_{\partial B_{r}}|y|^{1-2s}\mu(z)U(z)U_{\nu}(z)dH^n(z),$$ where $U_\nu$ is the exterior normal derivative, and $O(1)$ denotes an error that is bounded in $\mathcal{L}^\infty$ by constants depending only on the Lipschitz norm of $A$. 
\end{lem}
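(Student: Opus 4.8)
The plan is to differentiate $H(r)$ directly, using the co-area formula to write $H(r) = \int_0^r \left( \int_{\partial B_\rho} |y|^{1-2s}\mu(z) U^2 \, dH^n \right) d\rho$ is not quite the right starting point; rather I would first rescale the spherical integral. Write $H(r) = r^n \int_{\partial B_1} |ry|^{1-2s}\mu(ry)\, U(ry)^2 \, dH^n(\omega)$ by the substitution $z = r\omega$, $\omega \in \partial B_1$. The factor $|ry|^{1-2s} = r^{1-2s}|y|^{1-2s}$ pulls out cleanly, giving $H(r) = r^{n+1-2s}\int_{\partial B_1}|y|^{1-2s}\mu(r\omega)U(r\omega)^2\,dH^n(\omega)$. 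Now differentiate in $r$: the prefactor contributes $\frac{n+1-2s}{r}H(r)$, and the two remaining terms come from differentiating $\mu(r\omega)$ and $U(r\omega)^2$ inside the integral.

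The $\mu$-term is $\int_{\partial B_1}|y|^{1-2s}\frac{\partial}{\partial r}\mu(r\omega)\, U(r\omega)^2 \, dH^n(\omega)$. By the third estimate of Lemma 3.1, $|\frac{\partial}{\partial r}\mu(r\omega)| \le Lip_A$, and since $\mu(r\omega) \ge \lambda$ by the first estimate, I can bound $\frac{\partial}{\partial r}\mu(r\omega) \le \frac{Lip_A}{\lambda}\mu(r\omega)$ pointwise; hence this whole term is $O(1)\cdot r^{-(n+1-2s)}\cdot r^{-(n+1-2s)}$... more carefully, after undoing the rescaling it is $O(1)\cdot \frac{1}{r}\int_{\partial B_r}|y|^{1-2s}\mu(z)U(z)^2\,dH^n = O(1)\cdot\frac{1}{r}H(r)$, which is absorbed into the $O(1)H(r)$ after noting $\frac1r \cdot r = 1$; actually it contributes to the $O(1)H(r)$ term once we observe $\frac1r H(r) \cdot r = H(r)$ — I should just write it as an $O(1) H(r)$ contribution directly by tracking that the derivative of $\mu$ over $\mu$ is bounded. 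The $U^2$-term is $2\int_{\partial B_1}|y|^{1-2s}\mu(r\omega)U(r\omega)\,\nabla U(r\omega)\cdot\omega\,dH^n(\omega)$; undoing the rescaling and noting $\nabla U(z)\cdot \frac{z}{|z|} = U_\nu(z)$ on $\partial B_r$, this becomes exactly $\frac{2}{r}\cdot\frac{1}{r}\cdot r \cdots$ — again I need to chase the powers of $r$, but the claimed identity tells me it lands on $2\int_{\partial B_r}|y|^{1-2s}\mu(z)U(z)U_\nu(z)\,dH^n(z)$ with the correct normalization.

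So the steps in order are: (1) rescale $H(r)$ to an integral over the fixed sphere $\partial B_1$, extracting the homogeneity factor $r^{n+1-2s}$; (2) differentiate under the integral sign, justified by Proposition 2.7 and the $H^1_{loc}(|y|^{1-2s})$ regularity of $U$ together with the smooth dependence of the integrand on $r$ away from $y=0$ (the weight $|y|^{1-2s}$ is integrable on spheres, so there is no boundary issue); (3) identify the three resulting terms with $\frac{n+1-2s}{r}H(r)$, an $O(1)H(r)$ error coming from $\partial_r\mu$ controlled via Lemma 3.1, and the normal-derivative boundary term; (4) rescale back to $\partial B_r$. The main obstacle I anticipate is the rigorous justification of differentiating under the integral sign and of the trace/restriction of $\nabla U$ to spheres $\partial B_r$: because the equation is degenerate or singular at $\{y=0\}$, one must argue that $U \in H^1(|y|^{1-2s}, B_r)$ gives a well-defined $H^{1/2}$-type trace on almost every sphere with $\int_{\partial B_r}|y|^{1-2s}|\nabla U|^2 < \infty$ for a.e. $r$, so the identity holds for a.e. $r$; this is a Fubini-type argument using the weighted Sobolev estimate of Proposition 2.7, and the constant in $O(1)$ depends only on $Lip_A$, $\lambda$ and $n$ as claimed.
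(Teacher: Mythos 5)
Your proposal follows essentially the same route as the paper: rescale $H(r)=r^{n+1-2s}\int_{\partial B_1}|y|^{1-2s}\mu(r\omega)U(r\omega)^2\,dH^n(\omega)$, differentiate in $r$, bound the $\partial_r\mu$ term via Lemma 3.1 to get $O(1)H(r)$, and identify the $U\,\partial_r U$ term as the boundary integral of $|y|^{1-2s}\mu\,U U_\nu$ over $\partial B_r$ (the paper justifies the differentiation by uniform bounds on difference quotients for smooth $U$ plus an approximation argument, which is a minor variant of your a.e.-$r$ justification). Your power-of-$r$ bookkeeping, though muddled in the write-up, does resolve correctly: since $r^{n+1-2s}\int_{\partial B_1}|y|^{1-2s}f(r\omega)\,dH^n(\omega)=\int_{\partial B_r}|y|^{1-2s}f(z)\,dH^n(z)$, the $\mu$-term is exactly $O(1)H(r)$ (no stray $1/r$) and the normal-derivative term carries no extra factor.
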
 

\begin{proof}To get some ideas, we first give a proof for smooth $U$. 

By a change of variable $$H(r)=r^{n+1-2s}\int_{\partial B_1}|y|^{1-2s}\mu(rz)U^2(rz)dH^n(z).$$ For fixed $r_0>0$, \begin{align*}\lim_{r\to r_0}\frac{r^{n+1-2s}\mu(rz)U^2(rz)-r_0^{n+1-2s}\mu(r_0z)U^2(r_0z)}{r-r_0}=&(n+1-2s)r_0^{n-2s}\mu(r_0z)U^2(r_0z)\\+r_0^{n+1-2s}\frac{\partial}{\partial r}|_{r=r_0}\mu(rz)U^2(r_0z)&+2r_0^{n+1-2s}\mu(r_0z)U(r_0z)U_{\nu}(r_0z), \end{align*} where $\nu$ is the exterior unit normal.

Now for $r$ close to $r_0$, say, $\frac{1}{2}r_0<r<2r_0$,
\begin{align*}
|&\frac{r^{n+1-2s}\mu(rz)U^2(rz)-r_0^{n+1-2s}\mu(r_0z)U^2(r_0z)}{r-r_0}|\\\le & |\frac{r^{n+1-2s}-r_0^{n+1-2s}}{r-r_0}|\mu(rz)U^2(rz)+r_0^{n+1-2s}|\frac{\mu(rz)-\mu(r_0z)}{r-r_0}|U^2(rz)\\&+r_0^{n+1-2s}\mu(r_0z)|\frac{U^2(rz)-U^2(r_0z)}{r-r_0}|\\\le & (n+1-2s)2^{n-2s}r_0^{n-2s}\frac{1}{\lambda}\|U\|^2_{\infty}\\&+r_0^{n+1-2s}Lip_\mu\|U\|^2_{\infty}+r_0^{n+1-2s}\frac{1}{\lambda}\|U\|_{\infty}\sup_{\frac{1}{2}r_0<r<r_0}|U_\nu (rz)|\\\le & (n+1-2s)2^{n-2s}r_0^{n-2s}\frac{1}{\lambda}\|U\|^2_{\infty}\\&+r_0^{n+1-2s}Lip_A\|U\|^2_{\infty}+r_0^{n+1-2s}\frac{1}{\lambda}\|U\|_{\infty}\sup_{\frac{1}{2}r_0<r<r_0}|U_\nu (rz)|.
\end{align*}
For the last inequality we used Lemma 3.1.

Thus  
$|\frac{r^{n+1-2s}\mu(rz)U^2(rz)-r_0^{n+1-2s}\mu(r_0z)U^2(r_0z)}{r-r_0}|$ is uniformly bounded for $r\neq r_0$ and $\frac{1}{2}r_0<r<2r_0$. Hence 
\begin{align*}H'(r_0)&=(n+1-2s)r_0^{n-2s}\int_{\partial B_1}|y|^{1-2s}\mu(r_0z)U^2(r_0z)dH^n(z)\\&+r_0^{n+1-2s}\int_{\partial B_1}|y|^{1-2s}\frac{\partial}{\partial r}|_{r=r_0}\mu(rz)U^2(r_0z)dH^n(z)\\&+2r_0^{n+1-2s}\int_{\partial B_1}|y|^{1-2s}\mu(r_0z)U(r_0z)U_{\nu}(r_0z)dH^n(z)\\&=(\frac{n+1-2s}{r_0}+O(1))H(r_0)+2\int_{\partial B_{r_0}}|y|^{1-2s}\mu(z)U(z)U_{\nu}(z)dH^n(z).\end{align*}

Here $O(1)$ is some function bounded in sup-norm by $Lip_A$.

For general $U\in H^{1}(|y|^{1-2s}dxdy;B_1)$, note that the right-hand side is uniformly bounded on bounded subsets in $H^{1}(|y|^{1-2s}dxdy;B_1)$, the result follows from a standard approximation scheme. 
\end{proof} 

Another expression of $H'$ seems more useful:

\begin{lem}
$$H'(r)=(\frac{n+1-2s}{r}+O(1))H(r)+2D(r).$$
\end{lem}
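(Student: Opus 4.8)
The plan is to pass from the expression for $H'$ in Lemma 3.3 to the cleaner one in Lemma 3.4 by identifying the boundary integral $2\int_{\partial B_r}|y|^{1-2s}\mu(z)U(z)U_\nu(z)\,dH^n(z)$ with $2D(r)$ up to an error already absorbed into the $O(1)$ term. The natural tool is integration by parts (the divergence theorem) for the weighted equation $\operatorname{div}(|y|^{1-2s}A(z)\nabla U)=0$ that $U$ satisfies in $B_1$ by Proposition 2.8. First I would multiply this equation by $U$ and integrate over $B_r$, which formally gives
\begin{equation*}
0=\int_{B_r}U\,\operatorname{div}(|y|^{1-2s}A\nabla U)\,dz=\int_{\partial B_r}|y|^{1-2s}U\,\langle A\nabla U,\nu\rangle\,dH^n(z)-\int_{B_r}|y|^{1-2s}\langle A\nabla U,\nabla U\rangle\,dz.
\end{equation*}
The second term on the right is exactly $D(r)$. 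So the point is to compare the surface integral $\int_{\partial B_r}|y|^{1-2s}U\,\langle A\nabla U,\nu\rangle\,dH^n$ appearing here with the surface integral $\int_{\partial B_r}|y|^{1-2s}\mu(z)U\,U_\nu\,dH^n$ appearing in Lemma 3.3.

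The key algebraic step is to recognize that on $\partial B_r$ one has $\nu=z/|z|$, so $\langle A(z)\nabla U,\nu\rangle=\langle A(z)\nabla U, z\rangle/|z|=\langle \nabla U, A(z)^T z\rangle/|z|$; and by definition $A(z)z=\mu(z)\beta(z)$ (one should be slightly careful about symmetry of $A$, but the paper's $A$ is symmetric, so $A^Tz = Az = \mu(z)\beta(z)$). Hence $\langle A\nabla U,\nu\rangle=\mu(z)\langle\nabla U,\beta(z)\rangle/|z|$, whereas $\mu(z)U_\nu=\mu(z)\langle\nabla U, z\rangle/|z|$. The difference between the two surface integrands is therefore $|y|^{1-2s}\mu(z)U\,\langle\nabla U,\beta(z)-z\rangle/|z|$, and by the second estimate in Lemma 3.1, $|\beta(z)-z|\le |\beta(z)|+|z| \le (1+1/\lambda)|z|$, so $|\beta(z)-z|/|z|$ is bounded; moreover, the finer estimate $\partial_j\beta_i=\delta_{ij}+O(|z|)$ together with $\beta(0)=0$ shows $|\beta(z)-z|=O(|z|^2)$, hence $|\beta(z)-z|/|z|=O(|z|)=O(r)$ on $\partial B_r$. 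Thus the two boundary integrals differ by a term bounded by $Cr\int_{\partial B_r}|y|^{1-2s}|U||\nabla U|\,dH^n$, which after Cauchy–Schwarz and the equivalences $\mu\sim 1$, $A\sim\mathrm{id}$ is controlled by $C\big(H(r)+rD(r)\big)$ or so — but more simply one wants it of the form $O(1)H(r)$ plus something absorbable; here I would use that $r|\nabla U|^2$-type terms on the sphere relate to $D'$, or more cleanly note $\int_{\partial B_r}|y|^{1-2s}U\langle A\nabla U,\nu\rangle = D(r)$ exactly and $\int_{\partial B_r}|y|^{1-2s}\mu U U_\nu = D(r) + O(r)\cdot(\text{boundary term})$, then fold the $O(r)$ boundary term back; in fact the slickest route is to keep $2\int_{\partial B_r}|y|^{1-2s}\mu U U_\nu = 2D(r) - 2\int_{\partial B_r}|y|^{1-2s}\mu U\langle\nabla U,(z-\beta)/|z|\rangle$, bound the last integral by $C\int_{\partial B_r}|y|^{1-2s}|z|\,|U||\nabla U| \le C r \int_{\partial B_r}|y|^{1-2s}|U||\nabla U|$, and then absorb: Young's inequality gives $\le Cr\int_{\partial B_r}|y|^{1-2s}U^2 + Cr\int_{\partial B_r}|y|^{1-2s}|\nabla U|^2 \le O(1)H(r) + Cr\int_{\partial B_r}|y|^{1-2s}|\nabla U|^2$.

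The main obstacle, then, is that the last term $Cr\int_{\partial B_r}|y|^{1-2s}|\nabla U|^2\,dH^n$ is a genuine boundary Dirichlet term, not obviously bounded by $O(1)H(r)$ or by $D(r)$; it is really the integrand of $D'(r)$. I expect one resolves this not by brute force here but by observing that this is precisely the kind of term that gets reabsorbed when one later differentiates $N(r)=rD(r)/H(r)$ and combines Lemma 3.4 with the corresponding formula for $D'$: the cross terms cancel. So in the statement of Lemma 3.4 the honest reading is that the correction is $O(1)H(r)$ together with an error that is dominated by the boundary energy and hence handled in tandem with the $D'$ computation; if the paper genuinely claims the exact identity $H'(r)=(\tfrac{n+1-2s}{r}+O(1))H(r)+2D(r)$, then the reconciliation must be that the extra surface integral vanishes by a better structural identity — namely that $\beta$ is, up to the weight, the right "vector field" so that $\langle A\nabla U,\nu\rangle$ and $\mu U_\nu$ actually agree against $U$ on spheres after using the equation, or that the discrepancy is exact-form exact. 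I would therefore first try to show the discrepancy integral is \emph{exactly} zero or exactly of the form $(\text{bounded})\cdot H(r)$ using $\operatorname{div}(\beta)=n+1+O(|z|)$ and an integration by parts on $\partial B_r$ (divergence theorem on the sphere), and only fall back on the absorb-into-$D'$ argument if that fails. Granting Lemma 3.1 and Proposition 2.8, and modulo the usual density/approximation argument already invoked in Lemma 3.3 to reduce to smooth $U$, this is the complete strategy.
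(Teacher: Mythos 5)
Your setup is exactly right: you reduce $D(r)$ to the boundary integral $\int_{\partial B_r}|y|^{1-2s}U\langle A\nu,\nabla U\rangle\,dH^n$ by integrating the equation against $U$, and you correctly identify the discrepancy with the term in Lemma 3.3 as
$$\int_{\partial B_r}|y|^{1-2s}U\,\nabla U\cdot\kappa\,dH^n,\qquad \kappa:=A\nu-\mu(z)\nu=\frac{\mu(z)}{|z|}\bigl(\beta(z)-z\bigr).$$
But from here you have a genuine gap. Your crude bound on $|\beta(z)-z|$ followed by Young's inequality produces the boundary Dirichlet term $r\int_{\partial B_r}|y|^{1-2s}|\nabla U|^2$, which (as you yourself observe) is not controllable by $H(r)$ or $D(r)$, so that route dies. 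The missing observation that rescues the argument — and the one thing you never state — is that $\kappa$ is \emph{tangent} to $\partial B_r$: indeed $\langle\kappa,\nu\rangle=\langle A\nu,\nu\rangle-\mu(z)=0$ by the very definition of $\mu$. This is the whole point of the choice of $\mu$. Because $\kappa$ is tangent, $\nabla U\cdot\kappa$ is a purely tangential derivative, and $U\nabla U\cdot\kappa=\tfrac12\nabla(U^2)\cdot\kappa$ can be integrated by parts on the closed manifold $\partial B_r$ with no boundary contribution, giving
$$\int_{\partial B_r}|y|^{1-2s}U\,\nabla U\cdot\kappa=-\tfrac12\int_{\partial B_r}|y|^{1-2s}U^2\,\mathrm{div}(\kappa)-\tfrac12\int_{\partial B_r}U^2\,\nabla(|y|^{1-2s})\cdot\kappa.$$
One then checks $|\mathrm{div}(\kappa)|\le C\,\mathrm{Lip}_A$ and, using that $\kappa_{n+1}=(1-\mu(z))y/|z|$ with $|1-\mu(z)|\le\mathrm{Lip}_A|z|$, that $|\nabla(|y|^{1-2s})\cdot\kappa|\le C\,|y|^{1-2s}$; both terms are therefore $O(1)H(r)$, which is exactly the claimed identity.

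You do gesture at a ``divergence theorem on the sphere,'' so you are circling the right idea, but you never justify why it applies (tangency of $\kappa$), and your reference to $\mathrm{div}(\beta)=n+1+O(|z|)$ is the wrong quantity — that ambient divergence is what enters the $D'$ computation of Lemma 3.5, not the surface divergence of $\kappa$ needed here. Your proposed fallback — absorbing the error into the $D'$ estimate — would not prove the lemma as stated, since the lemma is an identity for $H'$ alone with error only $O(1)H(r)$. So the proposal correctly sets up the problem and names the right tool, but fails to supply the structural fact (tangency of $A\nu-\mu\nu$) that makes the tool applicable and does not carry the computation to its conclusion.
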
 

\begin{proof}
Using the equation for $U$, one sees $$D(r)=\int_{B_r}div(|y|^{1-2s}UA\nabla U)dz=\int_{\partial B_r}|y|^{1-2s}U\langle A\nu,\nabla U\rangle dH^{n}(z).$$

Now note that along $\partial B_r$, $\nu=z/|z|$, hence $$\langle A\nu-\mu(z)\nu,\nu\rangle=\langle A\nu,\nu\rangle-\mu(z)=\langle Az,z\rangle/|z|^2-\mu(z)=0.$$ That is, $\kappa:=A\nu-\mu(z)\nu$ lives in the tangent bundle of $\partial B_r$, thus one can apply the divergence theorem on $\partial B_r$ to obtain

\begin{align*}D(r)-&\int_{\partial B_r}|y|^{1-2s}\mu(z)UU_\nu dH^n(z)\\&=\int_{\partial B_r}|y|^{1-2s}U\nabla U\cdot\kappa dH^n(z)\\&=-\frac{1}{2}\int_{\partial B_r}|y|^{1-2s}U^2div(\kappa) dH^n(z)-\frac{1}{2}\int_{\partial B_r}U^2\nabla |y|^{1-2s}\cdot\kappa dH^n(z).\end{align*}

Now note that $|div(\kappa)|\le C(n)Lip_A$. 

$\nabla y^{1-2s}\cdot\kappa=(1-2s)(0,0,0,\dots, y^{-2s})\cdot\kappa=(1-2s)y^{-2s}\cdot(1-\frac{1}{\mu(z)})y\le C(s,\lambda)y^{1-2s}.$

Therefore, both terms are of the order $O(1)H(r)$. That is,
$$\int_{\partial B_r}|y|^{1-2s}\mu(z)UU_\nu dH^n(z)=D(r)+O(1)H(r).$$ 

Plug this into the previous Lemma we get the desired estimate.
\end{proof} 

By co-area formula, it is clear that for almost every $r$, $$D'(r)=\int_{\partial B_r}|y|^{1-2s}\langle A\nabla U,\nabla U\rangle dH^n(z).$$ A more useful expression is 
\begin{lem}
$$D'(r)= (\frac{n-2s}{r}+O(1))D(r)+2\int_{\partial B_r}|y|^{1-2s}\frac{1}{\mu} \langle A\nu,\nabla U\rangle^2dH^{n}(z).$$
\end{lem}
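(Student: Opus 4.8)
The plan is to differentiate $D(r)$, which by the co-area formula equals $\int_{\partial B_r}|y|^{1-2s}\langle A\nabla U,\nabla U\rangle\,dH^n(z)$ for a.e. $r$, and to extract from this the radial derivative $U_\nu$ together with an error of the form $(\tfrac{n-2s}{r}+O(1))D(r)$. The natural device, as in the Rellich–Nečas / Pohozaev identities underlying Almgren-type monotonicity, is to test the equation $\mathrm{div}(|y|^{1-2s}A\nabla U)=0$ against the radial vector field $\beta$ (or against $z$), i.e. to integrate $\mathrm{div}\big(|y|^{1-2s}\langle A\nabla U,\nabla U\rangle\,\beta\big)$ over $B_r$ and compare with $\mathrm{div}\big(|y|^{1-2s}(\beta\cdot\nabla U)A\nabla U\big)$. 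The point of using $\beta(z)=A(z)z/\mu(z)$ rather than $z$ itself is exactly the cancellation already exploited in Lemma 3.4: on $\partial B_r$ one has $A\nu-\mu(z)\nu$ tangent to the sphere, so $\langle A\nabla U,\nu\rangle$ and $\tfrac1\mu\langle A\nu,\nabla U\rangle$ are interchangeable up to the $O(1)H(r)$-type terms, and moreover $\langle A\nabla U,\beta\rangle = |z|\,\tfrac1\mu\langle A\nu,\nabla U\rangle\cdot\mu$ collapses nicely on the boundary. Concretely, I would compute the divergence of $|y|^{1-2s}(\beta\cdot\nabla U)A\nabla U$, use the PDE to kill the term where the divergence hits $|y|^{1-2s}A\nabla U$, and be left with $|y|^{1-2s}\langle A\nabla U,\nabla(\beta\cdot\nabla U)\rangle$; then expand $\nabla(\beta\cdot\nabla U)$ using $\partial_j\beta_i=\delta_{ij}+O(|z|)$ from Lemma 3.1 to split off a clean $|y|^{1-2s}\langle A\nabla U,\nabla U\rangle$ plus a second-derivative piece that integrates by parts against $\langle A\nabla U,\beta\rangle$.

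The key steps, in order, are: (1) write $D'(r)=\int_{\partial B_r}|y|^{1-2s}\langle A\nabla U,\nabla U\rangle\,dH^n$; (2) introduce the two vector fields $V_1:=|y|^{1-2s}(\beta\cdot\nabla U)\,A\nabla U$ and $V_2:=\tfrac12|y|^{1-2s}\langle A\nabla U,\nabla U\rangle\,\beta$, apply the divergence theorem to each over $B_r$, and note that on $\partial B_r$ both boundary integrands are built from $\langle A\nu,\nabla U\rangle$ and $\beta\cdot\nabla U = |z|\,U_\nu$ — here the spherical tangency trick ($A\nu=\mu\nu+\kappa$, $\kappa$ tangent) converts $\beta\cdot\nabla U$ into $\tfrac{r}{\mu}\langle A\nu,\nabla U\rangle$ up to tangential-derivative terms handled by divergence theorem on $\partial B_r$ exactly as in Lemma 3.4, producing the $\tfrac1\mu\langle A\nu,\nabla U\rangle^2$ term with the right constant; (3) compare $\mathrm{div}\,V_1$ and $\mathrm{div}\,V_2$ inside $B_r$: using the PDE, $\mathrm{div}\,V_1 = |y|^{1-2s}\langle A\nabla U,\nabla(\beta\cdot\nabla U)\rangle$, and $\nabla_k(\beta\cdot\nabla U)=(\partial_k\beta_i)\partial_iU + \beta_i\partial_{ik}U = \partial_kU + O(|z|)|\nabla U| + \beta_i\partial_{ik}U$; the first term gives $|y|^{1-2s}\langle A\nabla U,\nabla U\rangle$, matching $2D'(r)$-worth of integrand modulo the $O(1)D(r)$ error; (4) the Hessian term $|y|^{1-2s}a_{kl}\partial_lU\,\beta_i\partial_{ik}U$ is recognized as $\tfrac12\beta\cdot\nabla(|y|^{1-2s}\langle A\nabla U,\nabla U\rangle)$ minus error terms coming from $\beta$ hitting $a_{kl}$ and $|y|^{1-2s}$, i.e. it reassembles into $\mathrm{div}\,V_2$ up to lower-order terms; (5) collect all error terms — those from $\mathrm{div}(\beta)$, from $\partial_k\beta_i-\delta_{ij}=O(|z|)$, from $\nabla|y|^{1-2s}\cdot\beta$, and from $\partial\beta$ hitting $a_{kl}$ — and check each is bounded by $C|y|^{1-2s}(|\nabla U|^2 + |z|^{-1}\,\text{stuff})$ so that after integration they are all absorbed into $(\tfrac{n-2s}{r}+O(1))D(r)$; the factor $\tfrac{n-2s}{r}$ arises from $\mathrm{div}(\beta)\approx n+1$ combined with $\nabla|y|^{1-2s}\cdot\beta\approx (1-2s)$ and the scaling of the boundary term, exactly paralleling the $\tfrac{n+1-2s}{r}$ in Lemma 3.3 shifted by the one unit lost.

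The main obstacle I anticipate is the careful bookkeeping of the weight $|y|^{1-2s}$ interacting with the vector field $\beta$, since $\beta$ has a nonzero $y$-component in general and $\nabla|y|^{1-2s}$ is singular/degenerate at $\{y=0\}$; one must verify that $\nabla|y|^{1-2s}\cdot\beta = (1-2s)|y|^{-2s}\,\mathrm{sgn}(y)\,\beta_{n+1}$ is in fact controlled, which uses $\beta_{n+1} = (A(z)z)_{n+1}/\mu(z)$ and the structure $A_{n+1,n+1}=1$ (from Proposition 2.8) to get $\beta_{n+1} = y/\mu + O(|z|^2)$, so the weight-derivative term is comparable to $|y|^{1-2s}$ — precisely the computation already done for $\nabla y^{1-2s}\cdot\kappa$ in Lemma 3.4. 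A secondary technical point is that $U$ is only $H^1(|y|^{1-2s})$, not smooth, so the Hessian manipulations in step (3)–(4) must be justified by interior regularity for the degenerate equation away from $\{y=0\}$ (Fabes–Kenig–Serapioni) combined with an approximation/cutoff argument near the hyperplane, and one must confirm the boundary terms on $\partial B_r$ make sense for a.e. $r$ via the co-area formula and the $H^1$-trace bounds from Proposition 2.7. Everything else is a direct, if lengthy, application of the divergence theorem twice plus Lemma 3.1.
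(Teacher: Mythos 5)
Your proposal follows essentially the same route as the paper: the paper expands $\mathrm{div}\big(|y|^{1-2s}\beta\langle A\nabla U,\nabla U\rangle\big)-2\,\mathrm{div}\big(|y|^{1-2s}(\beta\cdot\nabla U)\,A\nabla U\big)$, uses the PDE to kill the $\mathrm{div}(|y|^{1-2s}A\nabla U)$ term, observes the cancellation of the Hessian contributions from the two pieces, and integrates over $B_r$ with the error terms controlled via Lemma 3.1 --- exactly your steps (2)--(5). One small simplification over what you anticipate: on $\partial B_r$ one has $\beta = rA\nu/\mu$ exactly, so $\beta\cdot\nu = r$ and $\beta\cdot\nabla U = \tfrac{r}{\mu}\langle A\nu,\nabla U\rangle$ hold identically, and the spherical-tangency decomposition plus divergence theorem on $\partial B_r$ (which the paper uses in Lemma 3.4) is not needed here.
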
 

\begin{proof}
We first note the following 
\begin{align*}div(y&^{1-2s}\beta\langle A\nabla U,\nabla U\rangle)-2 div(y^{1-2s}\beta\cdot\nabla U A\nabla U)\\&=div(\beta)y^{1-2s}\langle A\nabla U,\nabla U\rangle+y^{1-2s}\beta_l\frac{\partial a_{jk}}{\partial x_l}\frac{\partial}{\partial x_j}U\frac{\partial}{\partial x_k}U\\&+\beta_{n+1}(1-2s)y^{-2s}\langle A\nabla U,\nabla U\rangle-2y^{1-2s}a_{jk}\frac{\partial\beta_l}{x_k}\frac{\partial}{\partial x_l}U\frac{\partial}{\partial x_j}U.\end{align*}

Integrate over $B_r$, the left-hand side gives \begin{align*}\int_{\partial B_r}|y|^{1-2s}\langle A\nabla U,&\nabla U\rangle\beta\cdot\nu dH^n(z)-2\int_{\partial B_r}|y|^{1-2s}\langle A\nu,\nabla U\rangle\beta\cdot\nabla U dH^n(z)\\&=r\int_{\partial B_r}|y|^{1-2s}\langle A\nabla U,\nabla U\rangle dH^n(z)-2r\int_{\partial B_r}|y|^{1-2s}\frac{\langle A\nu,\nabla U\rangle^2}{\mu(z)} dH^n(z)\\&=rD'(r)-2r\int_{\partial B_r}|y|^{1-2s}\frac{\langle A\nu,\nabla U\rangle^2}{\mu(z)} dH^n(z),\end{align*} where we have used $\beta\cdot\nu=\frac{Ar\nu}\mu\cdot \nu=r$  and $\beta\cdot\nabla U=\frac{1}{\mu(z)}\langle Ar\nu,\nabla U\rangle$ along $\partial B_r$.

Now we evaluate the integral of the right-hand side over $B_r$.

With the fourth estimate in Lemma 3.1, $div(\beta)=n+1+O(r)$, $$\int_{B_r}div(\beta)|y|^{1-2s}\langle A\nabla U,\nabla U\rangle dz=(n+1+O(r))D(r).$$

With the second estimate in Lemma 3.1, $|\beta_l\frac{\partial a_{jk}}{\partial x_l}|=O(r)$. Thus $$\int_{B_r}y^{1-2s}\beta_l\frac{\partial a_{jk}}{\partial x_l}\frac{\partial}{\partial x_j}U\frac{\partial}{\partial x_k}Udz=O(r)D(r).$$

Note that $\beta_{n+1}=\frac{y}{\mu(z)}$, one sees $\beta_{n+1}=y+O(|z|)y$ thus \begin{align*}\int_{B_r}\beta_{n+1}(1-2s)y^{-2s}\langle A\nabla U,\nabla U\rangle dz&=\int_{B_r}(y+O(|z|)y)(1-2s)y^{-2s}\langle A\nabla U,\nabla U\rangle dz\\&=(1-2s)D(r)+O(r)D(r).\end{align*}

For the last term on the right-hand side, one again uses the four estimate in Lemma 3.1 to conclude its integral is of the order $2D(r)+O(r)D(r)$.

Indeed \begin{align*}
\int_{B_r}y^{1-2s}a_{jk}\frac{\partial\beta_l}{x_k}\frac{\partial}{\partial x_l}U\frac{\partial}{\partial x_j}Udz&=\int_{B_r}y^{1-2s}a_{jk}(\delta_{lk}+O(z))\frac{\partial}{\partial x_l}U\frac{\partial}{\partial x_j}Udz\\&=\int_{B_r}y^{1-2s}\langle A\nabla U,\nabla U\rangle dz+O(r)\int_{B_r}y^{1-2s}|\nabla U|^2dz\\&=D(r)+O(r)D(r).
\end{align*}
For the last estimate we have used the ellipticity of $A$.
\end{proof} 

We can now establish the almost monotonicity of the frequency function.

\begin{thm}
Let $A$ be Lipschitz and uniformly elliptic with $A(0)=id$. $L=div(A(\cdot)\nabla)$.

If $u\in H^s(\R)$ satisfies $$L^su=0 \text{ in $B_1'$}$$ then $$\bar{N}(r)=e^{Cr}N(r)$$ is nondecreasing, where $C$ depends only on elliptic constants, the dimension of the space and the Lipschitz constant of $A$.
\end{thm}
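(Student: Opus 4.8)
The plan is to follow the classical Almgren–Garofalo–Lin scheme: differentiate $\log N(r)$ and show that the derivative is bounded below by $-C$, which immediately yields that $e^{Cr}N(r)$ is nondecreasing. Writing $N(r)=rD(r)/H(r)$, we have
\[
\frac{N'(r)}{N(r)}=\frac{1}{r}+\frac{D'(r)}{D(r)}-\frac{H'(r)}{H(r)}.
\]
By Lemma 3.4, $H'(r)/H(r)=\frac{n+1-2s}{r}+O(1)+\frac{2D(r)}{H(r)}$, and by Lemma 3.6, $D'(r)/D(r)=\frac{n-2s}{r}+O(1)+\frac{2}{D(r)}\int_{\partial B_r}|y|^{1-2s}\mu^{-1}\langle A\nu,\nabla U\rangle^2\,dH^n$. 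Substituting, the singular $\frac1r$ terms cancel up to $O(1)$ and we are left with
\[
\frac{N'(r)}{N(r)}=O(1)+\frac{2}{D(r)}\int_{\partial B_r}|y|^{1-2s}\frac{\langle A\nu,\nabla U\rangle^2}{\mu}\,dH^n-\frac{2D(r)}{H(r)}.
\]

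The heart of the matter is therefore a Cauchy–Schwarz-type inequality showing that the last two terms combine to something nonnegative up to acceptable error. First I would rewrite $D(r)$ as the boundary integral $\int_{\partial B_r}|y|^{1-2s}\mu\,U U_\nu\,dH^n+O(1)H(r)$, using the identity from the proof of Lemma 3.5 (where the tangential vector field $\kappa=A\nu-\mu\nu$ produced only $O(1)H(r)$ errors). The key point is that $\langle A\nu,\nabla U\rangle=\mu U_\nu+\langle\kappa,\nabla U\rangle$, but $\kappa$ is tangential, so on $\partial B_r$ the quantity $\langle A\nu,\nabla U\rangle/\sqrt{\mu}$ and $\sqrt{\mu}\,U$ can be paired: by Cauchy–Schwarz with the measure $|y|^{1-2s}dH^n$,
\[
\Big(\int_{\partial B_r}|y|^{1-2s}\mu\,U U_\nu\,dH^n\Big)^2\le \Big(\int_{\partial B_r}|y|^{1-2s}\mu\,U^2\,dH^n\Big)\Big(\int_{\partial B_r}|y|^{1-2s}\mu\,U_\nu^2\,dH^n\Big).
\]
The first factor on the right is exactly $H(r)$; the second factor must be compared to $\int_{\partial B_r}|y|^{1-2s}\mu^{-1}\langle A\nu,\nabla U\rangle^2\,dH^n$ — and since $\mu U_\nu$ differs from $\langle A\nu,\nabla U\rangle$ only by the tangential term $\langle\kappa,\nabla U\rangle$, replacing $\mu^2 U_\nu^2$ by $\langle A\nu,\nabla U\rangle^2$ changes the integral by a controlled amount, but one must check this does not spoil the inequality — typically one absorbs the cross terms and uses that $U_\nu^2\le \mu^{-2}\langle A\nu,\nabla U\rangle^2$ fails in general, so instead one argues directly with the vector $\langle A\cdot,\nabla U\rangle$ restricted appropriately. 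After this manipulation one obtains $D(r)^2\le \big(1+O(r)\big)H(r)\int_{\partial B_r}|y|^{1-2s}\mu^{-1}\langle A\nu,\nabla U\rangle^2\,dH^n+O(1)H(r)D(r)$, which rearranges to
\[
\frac{2}{D(r)}\int_{\partial B_r}|y|^{1-2s}\frac{\langle A\nu,\nabla U\rangle^2}{\mu}\,dH^n-\frac{2D(r)}{H(r)}\ge -C,
\]
where the $O(1)H(r)D(r)/D(r)H(r)=O(1)$ and the $O(r)$ factors are harmless since $r<1$. Combined with the earlier $O(1)$ this gives $N'(r)/N(r)\ge -C$.

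The main obstacle I anticipate is precisely this Cauchy–Schwarz step: one needs the "correct" pairing so that the square appearing in $D'$ (namely $\langle A\nu,\nabla U\rangle^2/\mu$) matches, up to lower-order terms, the square one gets by applying Cauchy–Schwarz to the boundary representation of $D$. The subtlety is that $D(r)=\int_{\partial B_r}|y|^{1-2s}\langle A\nu,\nabla U\rangle U\,dH^n$ exactly (from the divergence theorem and the equation), so in fact the cleanest route is to apply Cauchy–Schwarz directly to this exact identity:
\[
D(r)^2=\Big(\int_{\partial B_r}|y|^{1-2s}\langle A\nu,\nabla U\rangle\,U\,dH^n\Big)^2\le\Big(\int_{\partial B_r}|y|^{1-2s}\frac{\langle A\nu,\nabla U\rangle^2}{\mu}\,dH^n\Big)\Big(\int_{\partial B_r}|y|^{1-2s}\mu\,U^2\,dH^n\Big),
\]
and the second factor is exactly $H(r)$ by definition — here the weight $\mu$ in the definitions of $H$ and $D$ has been chosen precisely so that this works with no error at all. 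Then $D(r)^2\le H(r)\int_{\partial B_r}|y|^{1-2s}\mu^{-1}\langle A\nu,\nabla U\rangle^2\,dH^n$ exactly, so
\[
\frac{2}{D(r)}\int_{\partial B_r}|y|^{1-2s}\frac{\langle A\nu,\nabla U\rangle^2}{\mu}\,dH^n\ge\frac{2D(r)}{H(r)},
\]
and the two competing terms in $N'/N$ combine to something $\ge 0$. Hence $N'(r)/N(r)\ge -C$, i.e. $(\log N(r)+Cr)'\ge 0$, which is exactly the statement that $\bar N(r)=e^{Cr}N(r)$ is nondecreasing. The remaining care is only bookkeeping: verifying that every $O(1)$ term genuinely has the claimed dependence on $\lambda$, $n$, and $\mathrm{Lip}_A$, and handling the non-smooth case by the approximation argument already invoked in Lemmas 3.4–3.6.
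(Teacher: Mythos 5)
Your proposal is correct and follows essentially the same argument as the paper: after the derivative formulas for $H$ and $D$, the decisive step is exactly the paper's, namely the exact identity $D(r)=\int_{\partial B_r}|y|^{1-2s}U\langle A\nu,\nabla U\rangle\,dH^n$ combined with Cauchy--Schwarz applied to the pairing $\mu^{-1/2}\langle A\nu,\nabla U\rangle$ against $\mu^{1/2}U$, which makes the two competing terms in $N'/N$ nonnegative up to an $O(1)$ error. The detour in your middle paragraph about comparing $\mu U_\nu$ with $\langle A\nu,\nabla U\rangle$ is unnecessary, as you yourself conclude, and the final clean route coincides with the paper's proof.
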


\begin{rem}
We have presented the version that is most relevant to our purpose. It is obvious that this almost monotonicity holds for all solutions to $$div(|y|^{1-2s}A(\cdot)\nabla U)=0,$$ that is, this U does not necessarily come from an extension.
\end{rem} 

\begin{proof}
A direct computation gives $N'(r)=N(r)(\frac{1}{r}+D'(r)/D(r)-H'(r)/H(r))$. With previous lemmata,

\begin{align*}\frac{1}{r}&+D'(r)/D(r)-H'(r)/H(r)\\&=\frac{1}{r}+\frac{n-2s}{r}+2\frac{\int_{\partial B_r}|y|^{1-2s}\frac{1}{\mu} \langle A\nu,\nabla U\rangle^2dH^{n}(z)}{D(r)}\\&-\frac{n+1-2s}{r}-2\frac{D(r)}{\int_{\partial B_r}\mu U^2dH^n(z)}+O(1)\\&=2(\frac{\int_{\partial B_r}|y|^{1-2s}\frac{1}{\mu} \langle A\nu,\nabla U\rangle^2dH^{n}(z)}{\int_{\partial B_r}|y|^{1-2s}U\langle A\nu,\nabla U\rangle dH^n(z)}-\frac{\int_{\partial B_r}|y|^{1-2s}U\langle A\nu,\nabla U\rangle dH^n(z)}{\int_{\partial B_r}|y|^{1-2s}\mu U^2dH^n(z)})+O(1).
\end{align*}

With Cauchy-Schwarz,
\begin{align*}&\int_{\partial B_r}|y|^{1-2s}\frac{1}{\mu} \langle A\nu,\nabla U\rangle^2dH^{n}(z)\int_{\partial B_r}|y|^{1-2s}\mu U^2dH^n(z)\\&=\int_{\partial B_r}|y|^{1-2s}(\frac{1}{\mu^{1/2}} \langle A\nu,\nabla U\rangle)^2dH^{n}(z)\int_{\partial B_r}|y|^{1-2s}(\mu^{1/2} U)^2dH^n(z)\\&\ge(\int_{\partial B_r}|y|^{1-2s}\frac{1}{\mu^{1/2}} \langle A\nu,\nabla U\rangle\mu^{1/2} UdH^n(z))^2\\&=(\int_{\partial B_r}|y|^{1-2s}U\langle A\nu,\nabla U\rangle dH^n(z))^2\end{align*}
thus $$N'(r)\ge -CN(r)$$ for some $C$ depending only on elliptic constants, the dimension of the space and the Lipschitz constant of $A$.
\end{proof} 

\begin{rem}
The constant $C$ does not depend on the order of the equation $s$.
\end{rem} 
Here are some natural consequences of this almost monotonicity.

\begin{cor}
If $U$ satisfies in $B_1$ $$div(A(\cdot)\nabla U)=0,$$ then there is a constant $C<\infty$ independent of $0<t<1/2$ such that $$\int_{B_{2t}}|y|^{1-2s}U^2\le C\int_{B_t}|y|^{1-2s} U^2.$$
\end{cor}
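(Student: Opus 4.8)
The plan is to deduce this doubling-type inequality from the almost monotonicity of $N(r)$ established in Theorem~3.6 (applied with $s$ replaced so that the weight $|y|^{1-2s}$ matches; note the statement allows $U$ to be any solution, not necessarily an extension), by the standard route: first control $H'(r)/H(r)$ in terms of $N(r)$, then integrate to get two-sided bounds on $H(r)$, and finally pass from the surface integrals $H(r)$ to the solid integrals $\int_{B_t}|y|^{1-2s}U^2$ via the co-area formula. Throughout, $H$ and $D$ and $N$ refer to the generalized quantities built from $\mu$ as in Section~3, and I will freely use $\mu \sim 1$ (Lemma~3.1) to replace $\int_{\partial B_r}|y|^{1-2s}\mu U^2$ by $\int_{\partial B_r}|y|^{1-2s}U^2$ up to harmless constants.

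First I would record the logarithmic derivative identity. From Lemma~3.3,
\[
\frac{H'(r)}{H(r)} \;=\; \frac{n+1-2s}{r} + O(1) + \frac{2D(r)}{H(r)} \;=\; \frac{n+1-2s}{r} + O(1) + \frac{2N(r)}{r}.
\]
By Theorem~3.6, $\bar N(r)=e^{Cr}N(r)$ is nondecreasing, so for $0<r\le 1/2$ we have $N(r)\le e^{C/2}\bar N(1/2)=:N_0<\infty$, i.e. $N$ is bounded above on $(0,1/2)$; it is also nonnegative since $D\ge 0$, $H>0$. Hence there is a constant $\Lambda$ (depending on the ellipticity, dimension, $\mathrm{Lip}_A$, $s$, and on $U$ through $N_0$) with
\[
\left|\frac{d}{dr}\log\!\big(r^{-(n+1-2s)}H(r)\big)\right| \;\le\; \Lambda \qquad \text{for } 0<r\le \tfrac12 .
\]
Integrating this from $t$ to $2t$ gives $H(2t)\le e^{\Lambda t}\,2^{\,n+1-2s}\,H(t)\le C\,H(t)$, and more generally $H(\rho)\le C\,H(\sigma)$ whenever $\tfrac12\sigma\le\rho\le 2\sigma$ with comparable constants; in fact the one-sided bound $H(\rho)\le C\,H(\sigma)$ for all $t\le \sigma\le\rho\le 2t$ is what I actually need.

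Finally I would convert to solid integrals. By co-area, $\int_{B_t}|y|^{1-2s}U^2 = \int_0^t \big(\int_{\partial B_\rho}|y|^{1-2s}U^2\,dH^n\big)\,d\rho$, and since $\mu\sim 1$ this is comparable to $\int_0^t H(\rho)\,d\rho$. For the lower bound on the right side: $\int_{B_t}|y|^{1-2s}U^2 \gtrsim \int_{t/2}^t H(\rho)\,d\rho \gtrsim (t/2)\,\inf_{[t/2,t]}H$. For the upper bound on the left side: $\int_{B_{2t}}|y|^{1-2s}U^2 \lesssim \int_0^{2t}H(\rho)\,d\rho$, and I split the range into dyadic pieces $[2^{-k-1}\cdot 2t,\,2^{-k}\cdot 2t]$, on each of which $H(\rho)\le C\,H$ evaluated at any comparable radius, and in particular $H(\rho)\le C\,\inf_{[t/2,t]}H$ for \emph{every} $\rho\le 2t$ by iterating the one-sided bound $O(\log(1/\rho))$ times — here one must check the constant does not blow up, which works because each doubling step costs a factor $e^{\Lambda\rho}\le e^{\Lambda t}$ and there are $\sim\log(1/t)$ of them, giving $e^{\Lambda t\log(1/t)}\le C$ uniformly for $t$ small (and for $t$ bounded away from $0$ the statement is trivial by compactness). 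Hence $\int_{B_{2t}}|y|^{1-2s}U^2 \lesssim t\,\inf_{[t/2,t]}H \lesssim \int_{B_t}|y|^{1-2s}U^2$, which is the claim.

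The main obstacle is the bookkeeping in the last step: getting from the \emph{local} (dyadic, $\tfrac12\le\rho/\sigma\le 2$) comparability of $H$ to a bound valid for all radii $\rho\in(0,2t]$ against $H$ near $t$, with a constant independent of $t$. This is where one genuinely uses that $N$ is \emph{bounded} near $0$ (a consequence of almost monotonicity), not merely almost monotone — the accumulated error over $\log(1/t)$ doubling steps is $\exp(O(t\log(1/t)))$, which is uniformly bounded precisely because the per-step error is $O(t)$ rather than $O(1)$. Everything else is routine: Lemma~3.3 for the $H'/H$ identity, Theorem~3.6 for the upper bound on $N$, and the co-area formula together with $\mu\sim 1$ for the passage between surface and solid integrals.
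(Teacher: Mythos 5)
Your first half matches the paper's argument, and your key intermediate conclusion $H(2t)\le CH(t)$ is correct, but both the justification you give for it and the passage to solid integrals contain genuine errors.

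The display $\bigl|\frac{d}{dr}\log(r^{-(n+1-2s)}H(r))\bigr|\le\Lambda$ is false. From Lemma~3.3 this logarithmic derivative equals $\frac{2N(r)}{r}+O(1)$, and since $N(r)\to\gamma\ge 0$, the term $2N(r)/r$ blows up like $1/r$ near the origin unless $\gamma=0$. What is true is only that the \emph{integral} of this quantity over a dyadic interval $(t,2t)$ is bounded uniformly in $t$: it contributes $2N_0\log 2+O(t)$ where $N_0=e^C N(1)$ is the uniform bound on $N$ coming from Theorem~3.6. That still yields $H(2t)\le C\,H(t)$ with $C$ independent of $t$, as the paper does, but the cost per doubling step is a fixed constant $\ge 2^{2N_0}2^{n+1-2s}$, not $e^{\Lambda t}\to 1$.

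This defeats the second half of your argument. You claim that iterating the dyadic comparison $\sim\log(1/t)$ times produces only an $\exp(O(t\log(1/t)))$ loss ``because the per-step error is $O(t)$.'' Since the actual per-step multiplicative constant is a fixed $C>1$, the accumulated factor over $\log(1/t)$ steps is $C^{\log(1/t)}=t^{-\log C}$, which is \emph{not} uniformly bounded. Your claimed estimate $H(\rho)\le C\inf_{[t/2,t]}H$ for all $\rho\le 2t$ with $C$ uniform in $t$ is in fact true, but not via iterated doubling --- it follows from Lemma~3.3 and $D\ge 0$, which give $\frac{H'}{H}\ge\frac{n+1-2s}{r}-C$, so $e^{Cr}r^{-(n+1-2s)}H(r)$ is nondecreasing and hence $H(\rho)\lesssim H(\sigma)$ for $\rho\le\sigma\le 1$. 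Even this detour is unnecessary: the paper simply integrates the inequality $H(2r)\le CH(r)$ over $r\in(0,t)$, so that by co-area $\frac12\int_{B_{2t}}|y|^{1-2s}\mu U^2=\int_0^tH(2r)\,dr\le C\int_0^tH(r)\,dr=C\int_{B_t}|y|^{1-2s}\mu U^2$, and then $\mu\sim 1$ finishes. One integration replaces your whole iteration and sidesteps the constant-accumulation issue entirely.
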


\begin{proof}
We follow a classical technique in Garofalo-Lin  \cite{GL1}. See also Han-Lin \cite{HL}.

By Lemma 3.3 and Remark 3.7, one has

$$\frac{d}{dr}\log\frac{H(r)}{r^{n+1-2s}}=2\frac{N(r)}{r}+O(1).$$ Here $O(1)$ is independent of $r$.

Integrate this over the interval $(r,2r)$ and take into consideration that $N(r)\le e^CN(1)$ to obtain $$\log\frac{H(r)}{r^{n+1-2s}}|^{2r}_{r}\le 2e^CN(1)\log r|_{r}^{2r}+Cr.$$
That is,
$H(2r)\le CH(r)$. 

Integrate this for $0<r<t$ to conclude.\end{proof} 

Once we have this doubling property, the unique continuation is a natural consequence:
\begin{proof}(of Theorem 1.2.)
Choose $\ell$ big enough such that $C(\frac{1}{2})^{n+\ell}<1$, where $C$ is the constant in the previous lemma.

For each $m\in\mathbb{N}$, the previous lemma implies
\begin{align*}\int_{B_1}|y|^{1-2s}U^2&\le C^{m}\int_{B_{\frac{1}{2^m}}}|y|^{1-2s}U^2\\&=C^m2^{-mn}\frac{1}{|B_{\frac{1}{2^m}}|}\int_{B_{\frac{1}{2^m}}}|y|^{1-2s}U^2\\&\le C^m(\frac{1}{2})^{m(n+\ell)}(\frac{1}{2})^{-m\ell}\frac{1}{|B_{\frac{1}{2^m}}|}\int_{B_{\frac{1}{2^m}}}|y|^{1-2s}U^2.
\end{align*}

Now suppose $U$ has vanishing order at $0$ greater than $\ell$, then $(\frac{1}{2})^{-m\ell}\frac{1}{|B_{\frac{1}{2^m}}|}\int_{B_{\frac{1}{2^m}}}|y|^{1-2s}U^2$ remains bounded as $\ell\to\infty$, while $C^m(\frac{1}{2})^{m(n+\ell)}$ tends to $0$.

Thus $U=0$ in $B_1$.
\end{proof}

\begin{rem}
As already mentioned in the Introduction, since the function we are studying is $u$, defined on a null set in $\mathbb{R}^{n+1}$. The previous corollary is not enough to conclude $u=0$ if $u$ vanishes at infinite order. We still need to do a blow-up analysis as in Fall-Felli \cite{FF}.
\end{rem} 
\begin{cor}
There exists the limit $\gamma=\lim_{r\to 0^+}N(r)$.
\end{cor}

\begin{proof}
Monotonicity of $\bar{N}$ implies the existence of $\lim_{r\to 0^+}\bar{N}(r)$. Since $\exp(Cr)$ obviously converges to $1$ at $0$, $\lim N(r)=\lim\bar{N}(r)$ also exists.
\end{proof} 

\begin{cor}
Given any $\delta>0$, there exists $C=C(\delta,n,s)>0$ such that $$H(r)\ge Cr^{n+1-2s+2\gamma+\delta}.$$
\end{cor}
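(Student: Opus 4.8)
The plan is to derive the lower bound for $H(r)$ by integrating the logarithmic derivative formula for $H$ established in Lemma 3.3. Recall from Lemma 3.3 that
$$H'(r) = \left(\frac{n+1-2s}{r} + O(1)\right)H(r) + 2D(r),$$
so that, dividing by $H(r)$ and using $D(r) = N(r)H(r)/r$, we obtain
$$\frac{d}{dr}\log H(r) = \frac{n+1-2s}{r} + \frac{2N(r)}{r} + O(1),$$
which rearranges to
$$\frac{d}{dr}\log\frac{H(r)}{r^{n+1-2s}} = \frac{2N(r)}{r} + O(1),$$
with the $O(1)$ term bounded uniformly in $r$ by a constant depending only on the Lipschitz norm of $A$.

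Next I would use Corollary 3.10 together with the almost monotonicity of Theorem 3.6. Since $\bar N(r) = e^{Cr}N(r)$ is nondecreasing and $\bar N(r) \to \gamma$ (after noting $e^{Cr}\to 1$), we have $\gamma = \inf_{r} \bar N(r) \le \bar N(r)$ for all small $r$, hence $N(r) = e^{-Cr}\bar N(r) \ge e^{-Cr}\gamma$. For $r$ small this gives $N(r) \ge \gamma - o(1)$; more precisely, given $\delta > 0$, there is $r_\delta > 0$ such that $N(r) \le \gamma + \delta/4$ fails to be needed — what we need is the \emph{lower} bound $2N(r) \le 2\gamma + \delta/2$? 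No: for a \emph{lower} bound on $H(r)$ we want an \emph{upper} bound on $2N(r)/r$ when integrating from $r$ up to a fixed radius. So instead I would use that $N(r) \le e^C N(1)$ is bounded above, but more sharply, since $N$ has limit $\gamma$ at $0$, for any $\delta > 0$ there is $r_\delta$ with $N(t) \le \gamma + \delta/2$ for all $t < r_\delta$.

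Then fix $r < r_\delta$ and integrate the displayed identity over $(r, r_\delta)$:
$$\log\frac{H(r_\delta)}{r_\delta^{n+1-2s}} - \log\frac{H(r)}{r^{n+1-2s}} = \int_r^{r_\delta}\frac{2N(t)}{t}\,dt + \int_r^{r_\delta}O(1)\,dt \le (2\gamma+\delta)\log\frac{r_\delta}{r} + Cr_\delta.$$
Rearranging,
$$\log\frac{H(r)}{r^{n+1-2s}} \ge \log\frac{H(r_\delta)}{r_\delta^{n+1-2s}} - (2\gamma+\delta)\log\frac{r_\delta}{r} - Cr_\delta,$$
which exponentiates to $H(r) \ge C(r_\delta, \delta) \, r^{n+1-2s+2\gamma+\delta}$ for $r < r_\delta$. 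For $r$ in the remaining compact range $[r_\delta, 1)$, $H$ is continuous and strictly positive (as noted in the paper, $H(r) = 0$ forces $U \equiv 0$), so the bound holds there after possibly shrinking the constant; this absorbs into a single $C = C(\delta, n, s)$ (and the Lipschitz and ellipticity data, which are fixed).

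The main obstacle is essentially bookkeeping: one must be careful that the constant $C$ in the final statement is allowed to depend on $\delta$ (it is, per the statement), since the threshold $r_\delta$ below which $N(t) \le \gamma + \delta/2$ depends on how fast $N$ converges to $\gamma$, which is not quantitative. The only genuine input is the logarithmic derivative formula and the existence of $\gamma = \lim N(r)$, both already available; the rest is integration of a scalar ODE inequality. One minor point to check is the direction of the inequality — using that $N(t)$ stays \emph{below} $\gamma + \delta/2$ near $0$ gives the desired lower bound on $H$, whereas the \emph{upper} bound on $H$ (not needed here) would instead require a lower bound $N(t) \ge \gamma$, which likewise follows from monotonicity of $\bar N$.
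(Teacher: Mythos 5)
Your proposal is correct and follows essentially the same route as the paper: integrate the identity $\frac{d}{dr}\log\bigl(H(r)/r^{n+1-2s}\bigr)=2N(r)/r+O(1)$ from Lemma 3.3 over $(r,r_\delta)$, where $r_\delta$ is chosen via $\lim_{r\to 0}N(r)=\gamma$ so that $N(t)\le\gamma+\delta/2$ below $r_\delta$, and exponentiate. The brief digression about the direction of the bound resolves itself correctly, and the extra remark covering $r\in[r_\delta,1)$ is a harmless addition not present in the paper.
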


\begin{proof}
With Lemma 3.3, one has $H'(r)/H(r)=(\frac{n+1-2s}{r}+O(1))+2N(r)/r$. Integrate over the interval $(r,r_{\delta})$, where $r_{\delta}$ is such that $N(\sigma)<\gamma+\delta$ for all $\sigma<r_{\delta}$, to obtain 

\begin{align*}\int_r^{r_{\delta}}H'(t)/H(t)dt&=\int_r^{r_{\delta}}(\frac{n+1-2s}{t}+O(1))dt+\int_r^{r_{\delta}}2N(t)/tdt\\&\le (n+1-2s)\ln(r_\delta/r)+C(r_\delta-r)+2(\gamma+\delta)\ln(r_\delta/r)\\&\le (n+1-2s+2\gamma+\delta)\ln(r_\delta/r)+C.\end{align*}

That is, $$\ln H(r_\delta)-\ln H(r)\le (n+1-2s+2\gamma+\delta)\ln(r_\delta)-(n+1-2s+2\gamma+\delta)\ln(r)+C $$ hence $H(r)\ge C(n,s,\delta) r^{n+1-2s+2\gamma+\delta}$.
\end{proof}

%%%%%%%%%%%%%%%%%%%%%%%%%%%%%%%%%%%%%%%%%%%%%%%%%%%%%%%%%%%%%%%%%%%%%%%%%%%%%%%%%%%%%%%%%%%%%%%%%%%%%%%%
\section{The blow-up}

In this section we prove the main result via a blow-up analysis. This is motivated by the work of Fall-Felli \cite{FF}, where they gave very detailed description of possible blow-up limits in the case of fractional Laplacian. Since we are dealing with variable-coefficient equations here, it is unlikely that one can obtain such precise information. However, we do have sufficient information to make sure nontrivial solutions cannot vanish at infinite order. 

Without loss of generality we assume $u$ solves the equation in $B_1'$ and $A(0)=id$, so we are in the situation of Section 3.

\begin{proof}(of Theorem 1.3.)

For $1>\tau>0$ define $$U_\tau(z)=\frac{U(\tau z)}{\sqrt{H(\tau)/\tau^{n+1-2s}}}.$$

Their height functions and Dirichlet energies satisfy

\begin{align*}H_{U_\tau}(r):&=\int_{\partial B_r}|y|^{1-2s}\mu(\tau z)U_\tau^2dH^n(z)\\&=H(\tau r)/H(\tau)\end{align*}
and \begin{align*}D_{U_\tau}(r):&=\int_{B_r}|y|^{1-2s}\langle A(\tau z)\nabla U_\tau,\nabla U_\tau\rangle dz\\&=\tau D(\tau r)/H(\tau).
\end{align*}

Therefore $$N_{U_\tau}(r):=rD_{U_\tau}(r)/H_{U_\tau}(r)=\tau rD(\tau r)/H(\tau r)=N(\tau r).$$

Note that in particular, 
\begin{align*}\int_{\partial B_1}|y|^{1-2s}\mu(\tau z)U^2_\tau dH^n(z)&=H_{U_\tau}(1)\\&=H(\tau)/H(\tau)\\&=1.\end{align*}

and \begin{align*}\int_{B_1}|y|^{1-2s}\langle A(\tau z)\nabla U_\tau(z),\nabla U_\tau(z)\rangle dz&=D_{U_\tau}(1)\\&=\tau D(\tau)/H(\tau)\\&=N(\tau)\\&\le e^CN(1)/e^{C\tau}\\&\le CN(1).
\end{align*}

As a result, this family $\{U_\tau\}$ is bounded in $H^1(B_1,|y|^{1-2s}dz)$, and up to a subsequence they converge weakly to some $U_0\in H^1(B_1,|y|^{1-2s}dz)$. 

On the other hand, each $U_\tau$ satisfies $$div(|y|^{1-2s}A(\tau\cdot)\nabla U_\tau)=0 \text{ in $B_1$}.$$ Consequently $$div(|y|^{1-2s}\nabla U_0)=0 \text{ in $B_1$}.$$ Note that this is the equation corresponding to the fractional Laplacian. 

With theory for degenerate elliptic equations with $A_2$ weights \cite{FKS}, we see $\{U_\tau\}$ is also compact in $C^{\alpha_0}_{loc}(B_1)$ for some small $\alpha_0>0$.  A more recent account of interior regularity for operators we are interested in is given in Caffarelli-Stinga \cite{CSt}, where they give a extensive of both the interior regularity and boundary regularity. One could also consult the theory in Jin-Li-Xiong \cite{JLX}. 

Passing to the weak formulation, it is also clear $$\lim_{y\to 0^+}y^{1-2s}\frac{\partial}{\partial y}U_0(x,y)=0 \text{ in $B_1'$}.$$

Therefore Theorem 4.1 of Fall-Felli \cite{FF} applies to $U_0$ and we can conclude 
\begin{equation}u_0(\tau x)/\tau^{\gamma_0}\to |x|^{\gamma_0}\psi(x/|x|) \text{ in $C_{loc}^{1,\alpha'}(B_1')$},\end{equation}where $u_0(x)=U_0(x,0)$, $\gamma_0=\lim_{r\to 0}N_{U_0}(r)$ and $\psi$ is a nontrivial function on the sphere.

Note that $U_\tau\to U_0$ weakly in $H^1(B_1;|y|^{1-2s}dz)$ implies for fixed $r>0$, $\liminf_{\tau} D_{U_{\tau}}(r)\ge D_{U_0}(r)$ and $\lim_{\tau} H_{U_\tau}(r)=H_{U_0}(r)$. Hence $$\gamma_0=\lim_{r} N_{U_0}(r)\le \lim_r \liminf_{\tau} N_{U_\tau}(r)=\lim_r \liminf_{\tau} N(\tau r)=\gamma.$$

Consequently, $$\int_{B_1'} (u_0(\tau x)/\tau^{\gamma})^2dx\ge \int_{B_1'} (u_0(\tau x)/\tau^{\gamma_0})^2dx\ge c>0$$ when $\tau$ is small enough. The last inequality is a consequence of (4.1). 

We fix such a small $\tau>0$.

Now for small $r>0$, \begin{align*}
\int_{B_{r\tau }'}u^2(x)dx&=r^n\int_{B_{\tau}'}u^2(rx)dx\\&=r^n\frac{H(r)}{r^{n+1-2s}}\int_{B'_\tau}\frac{u^2(rx)}{H(r)/r^{n+1-2s}}dx\\&=r^n\frac{H(r)}{r^{n+1-2s}}\int_{B'_\tau}u^2_r(x)dx\\&=r^n\frac{H(r)}{r^{n+1-2s}}\tau^{n+2\gamma}\int_{B_1'}u^2_r(\tau x)/\tau^{2\gamma}dx\\&\ge r^n\frac{H(r)}{r^{n+1-2s}}\tau^{n+2\gamma}\frac{1}{2}c.
\end{align*}For the last inequality we used the uniform convergence of $u_r$ to $u_0$.

Thus for $k>0$ 
$$\frac{1}{(\tau r)^k}(\frac{1}{|B_{r\tau }'|}\int_{B_{r\tau }'}u^2(x)dx)^{1/2}\ge \sqrt{1/2}c^{1/2}\tau^{\gamma-k}\sqrt{H(r)/r^{n+1-2s+2k}}.$$ With Corollary 3.12, the right-hand side goes to infinity whenever $k>\gamma$, thus the vanishing degree of $u$ is no larger than $\gamma=\lim_{r\to 0}N(r)$.
\end{proof}

%%%%%%%%%%%%%%%%%%%%%%%%%%%%%%%%%%%%%%%%%%%%%%%%%%%%%%%%%%%%%%%%%%%%%%%%%%%%%%%%%%%%%%%%%%%%%%%%%%%%%%%%
\section{The case of bounded domains}
For a bounded Lipschitz domain $\Omega\subset\R$, we can still define fractional order of elliptic operators using spectral theory \cite{S}\cite{ST}\cite{SZ}. Moreover the extension procedure works in the same fashion \cite{CSt}. In particular, regularity estimates in Section 2 hold true when $\R$ is replaced by $\Omega$.

If $u$ solves a fractional order equation in $U\subset\Omega$, then we can again reflect $U$ to both sides of $U$ using the same argument. Observe then that for results in Section 3 and 4 one only needs the degenerate/ singular \textit{local} equation for the reflected extension, hence they are true for the case of bounded domains.

To conclude Theorem 1.4 is true. 

%%%%%%%%%%%%%%%%%%%%%%%%%%%%%%%%%%%%%%%%%%%%%%%%%%%%%%%%%%%%%%%%%%%%%%%%%%%%%%%%%%%%%%%%%%%%%%%%%%%%%%%%
\section*{Acknowledgements}
The author would like to thank his PhD advisor Prof. Luis Caffarelli for his constant encouragement and guidance. The author is also grateful to Dennis Kriventsov and Yi-Hsuan Lin for many fruitful discussions and Pablo Ra\'ul Stinga for a careful reading of an earlier version of this paper and his many insightful suggestions. The huge improvement of this paper from its previous version was entirely due to the comments of two anonymous referees. 

Part of this work was done during a visit to Hong Kong University of Science and Technology. The author would like to thank Tianling Jin for the invitation and the staff for their hospitality.

%%%%%%%%%%%%%%%%%%%%%%%%%%%%%%%%%%%%%%%%%%%%%%%%%%%%%%%%%%%%%%%%%%%%%%%%%%%%%%%%%%%%%%%%%%%%%%%%%%%%%%%%

%%%%%%%%%%%%%%%%%%%%%%%%%%%%%%%%%%%%%%%%%%%%%%%%%%%%

\end{document}